\newtheorem{thm}{Theorem}[section]
\newtheorem*{thm*}{Theorem}
\newtheorem{lem}[thm]{Lemma}
\newtheorem{cor}[thm]{Corollary}
\newtheorem{prop}[thm]{Proposition}
\theoremstyle{definition}
\newtheorem{notn}[thm]{Notation}
\newtheorem*{notn*}{Notation}
\newtheorem*{hyp*}{Hypothesis}
\newtheorem{rem}[thm]{Remark}
\newtheorem*{rem*}{Remark}
\numberwithin{equation}{section}
\newcommand{\thmref}[1]{Theorem~\textup{\ref{#1}}}
\newcommand{\corref}[1]{Corollary~\textup{\ref{#1}}}
\newcommand{\lemref}[1]{Lemma~\textup{\ref{#1}}}
\newcommand{\midtext}[1]{\quad\text{#1}\quad}
\newcommand{\righttext}[1]{\quad\text{#1 }}
\newcommand{\T}{\mathbb T}
\newcommand{\KK}{\mathcal K}
\newcommand{\LL}{\mathcal L}
\newcommand{\OO}{\mathcal O}
\newcommand{\TT}{\mathcal T}
\renewcommand{\epsilon}{\varepsilon}
\DeclareMathOperator*{\spn}{span}
\DeclareMathOperator*{\clspn}{\overline{\spn}}
\newcommand{\case}{& \text{if }}
\newcommand{\<}{\langle}
\renewcommand{\>}{\rangle}
\newcommand{\inv}{^{-1}}
\renewcommand{\bar}{\overline}
\newcommand{\ann}{^\perp}
\newcommand{\inn}[2]{\langle #1,#2 \rangle}
\begin{document}
\title[A new look at crossed product correspondences]{A new look at crossed product correspondences and associated C*-algebras}
%The Hao-Ng isomorphism --- a new look}

\author[B\'edos]{Erik B\'edos}
\address{Institute of Mathematics
\\University of Oslo
\\PB 1053 Blindern, 0316 Oslo, Norway}
\email{bedos@math.uio.no}

\author[Kaliszewski]{S. Kaliszewski}
\address{School of Mathematical and Statistical Sciences
\\Arizona State University
\\Tempe, Arizona 85287}
\email{kaliszewski@asu.edu}

\author[Quigg]{John Quigg}
\address{School of Mathematical and Statistical Sciences
\\Arizona State University
\\Tempe, Arizona 85287}
\email{quigg@asu.edu}

\author[Robertson]{David Robertson}
\address{School of Mathematics and Applied Statistics
\\University of Wollongong
\\NSW 2522, Australia}
\email{dave84robertson@gmail.com}

\subjclass[2000]{Primary  46L06, 46L08, 46L55}

\keywords{$C^*$-algebra, Cuntz-Pimsner algebra, crossed product}

\date{\today}

\begin{abstract}

When a locally compact  group acts on a $C^*$-correspondence, it also acts on the associated Cuntz-Pimsner algebra in a natural way. Hao and Ng have shown that when the group is amenable the Cuntz-Pimsner algebra of the crossed product correspondence is isomorphic to the crossed product of the Cuntz-Pimsner algebra. In this paper, we have a closer look at this isomorphism in the case where the group is not necessarily amenable. We also consider what happens at the level of Toeplitz algebras.
\end{abstract}
\maketitle

\section{Introduction}
Suppose that a locally compact group $G$ acts on a nondegenerate $C^*$-correspondence $(X,A)$. By universality of the Cuntz-Pimsner algebra $\OO_X$, $G$ also acts on $\OO_X$ and 
%and $\gamma$ is an action of  on $X$ that is compatible with an action $\alpha$ on $A$.
the \emph{Hao-Ng isomorphism} \cite[Theorem~2.10]{HN} says that if $G$ is amenable, then
\begin{equation}\label{iso}
\OO_X\rtimes G\cong \OO_{X\rtimes G}\,.
\end{equation}
%where the action of $G$ on $\OO_X$ is the one naturally associated to .
We'd like to know whether the Hao-Ng isomorphism holds without the amenability hypothesis. 
Hao and Ng mention in a footnote that Katsura informed them that the isomorphism is ok if $G$ is only exact.
Among other things, we'd like to understand this. 

The proof in \cite{HN} constructs an isomorphism $\OO_{X\rtimes G}\to \OO_X\rtimes G$.
On the other hand, \cite[Proposition~4.3]{kqrfunctor} asserts that for any $G$ there is a surjective homomorphism
$\OO_X\rtimes G\to \OO_{X\rtimes G}$,
and \cite[Theorem~5.1]{kqrcoact} applies this proposition to show that if $G$ is amenable, then this homomorphism is faithful, recovering the Hao-Ng result.

Unfortunately, we have discovered 
a gap
%an error
 in the proof of \cite[Proposition~4.3]{kqrfunctor}: 
%we appealed to
it relies on
the part of \cite[Proposition~2.7]{HN} that shows that
\[
J_X\rtimes G\subset J_{X\rtimes G}\,,
\]
where $J_X$ denotes the Katsura ideal of $A$ associated with a correspondence $(X,A)$;
however,  \cite{HN} makes a blanket assumption, throughout their paper, that $G$ is amenable, and it is not at all clear that the above inclusion holds in general. However, it does hold if one considers {\it reduced} crossed products instead. As we will see in Theorem \ref{reduced}, it follows that \cite[Proposition~4.3]{kqrfunctor} is correct if it is stated in the reduced setting.
%Thus we cannot use \cite[Proposition~2.7]{HN} in general, and we will explore this issue.

Concerning the question whether the isomorphism \eqref{iso} always holds, either for full crossed products or for reduced crossed products,
%our strategy in our quest to generalize the Hao-Ng isomorphism is to 
we will first look at the similar problem for Toeplitz algebras, which is easier to handle since we don't have to fuss with the Katsura ideals.
There is a technical issue --- which we are able to handle --- involving the Gauge-Invariant Uniqueness Theorem for Toeplitz algebras. The conclusion is that everything works fine in the Toeplitz case, both in the full and in the reduced setting. 

Turning to Cuntz-Pimsner algebras, we first consider the case of full crossed products and show that  \eqref{iso} holds whenever $J_X\rtimes G = J_{X\rtimes G}$. In the case of reduced crossed products, we note that D.-W.\ Kim has recently shown \cite{kim:coaction} that $\OO_X\rtimes_r G\cong \OO_{X\rtimes_r G}$ whenever a certain natural Toeplitz representation of the crossed product correspondence $(X\rtimes_r G, \, A\rtimes_r G)$ is Cuntz-Pimsner covariant. We give a different proof of this result and use it to show that $\OO_X\rtimes_r G\cong \OO_{X\rtimes_r G}$  whenever $J_X\rtimes_r G = J_{X\rtimes_r G}$.  In the case of a discrete group $G$, we show that the equality $J_X\rtimes_r G = J_{X\rtimes_r G}$ holds when $G$ is exact \cite{BO} or the action of $G$ on $A$ has Exel's approximation property \cite{exelamenable}. In particular, we get that  the isomorphism \eqref{iso}  holds in its reduced version whenever $G$ is discrete and exact.

%We also give some partial results, which go furthest for the Toeplitz algebras (both full and reduced crossed products).

%\section{Some references to be added in the list}
%
%\{anan:exact\} =  C. Anantharaman-Delaroche: Amenability and exactness for dynamical systems and their $C^*$-algebras. Trans. Amer. Math. Soc. 354 (2002), 4153-4178.
%
%\{bc:regular\} = E.~B\'edos, R.~Conti: On discrete  twisted C$^*$-dynamical systems, Hilbert $C^*$-modules and regularity. M{\"u}nster J. Math. 5 (2012), 183--208.
%
%\{KatsuraIdeal\} = T.~Katsura: Ideal structure of $C^*$-algebras associated with $C^*$-correspondences. Pacific. J. Math. 230 (2007), 107--146.
%
%\{kim:coaction\} = D.-W. Kim: Coactions of Hopf $C^*$-algebras on Cuntz-Pimsner algebras. Preprint (2014), arXiv:14076106v1.

\section{Preliminaries}\label{prelim}

%\jqalert{insert some references!}
Throughout this paper, $(X,A)$ will denote a nondegenerate $C^*$-correspondence, as defined in \cite{KatsuraCorrespondence}; that is, $A$ is a $C^*$-algebra and $X$ is a (right) Hilbert $A$-module with a nondegenerate left action of $A$ on $X$, given by a homomorphism $\phi=\phi_A:A\to \LL(X)$, where $\LL(X)$ denotes the $C^*$-algebra of adjointable operators on $X$.  By correspondence, we will always mean a nondegenerate $C^*$-correspondence. All homomorphisms between $C^*$-algebras will be assumed to be $\ast$-preserving, and $M(A)$ will denote the multiplier algebra of a $C^*$-algebra $A$. Nondegeneracy of the left action implies that $M(X) := \LL(A,X)$ is an $M(A)$ correspondence. We call $M(X)$ the multiplier correspondence. Our notation will be the same as in \cite{kqrfunctor}, but the reader should also consult \cite{lan:hilbert, tfb, taco, KatsuraCorrespondence, KatsuraIdeal, enchilada} for background on Hilbert $C^*$-modules and $C^*$-correspondences.  
%For a correspondence $(X,A)$ we write:
We write: 
\begin{itemize}
\item $(t_X,t_A)$ for the universal Toeplitz representation of $(X,A)$ in the Toeplitz algebra $\TT_X$;
\item $(k_X,k_A)$ for the universal Cuntz-Pimsner covariant representation in the Cuntz-Pimsner algebra $\OO_X$;
\item $T_X:\TT_X\to \OO_X$ for the quotient map;
\item $\psi\times_\TT \pi:\TT_X\to B$ for the homomorphism associated to a Toeplitz representation $(\psi,\pi)$ of $(X,A)$ in a $C^*$-algebra $B$;
the range of $\psi\times_\TT \pi$ is then equal to the $C^*$-subalgebra $C^*(\psi, \pi)$ of $B$ generated by the ranges of $\psi$ and $\pi$; 
\item $\psi\times \pi:\OO_X\to B$ for the homomorphism associated to a Cuntz-Pimsner covariant representation $(\psi,\pi)$ of $(X,A)$ in 
$B$; clearly, we have $\psi\times_\TT \pi = (\psi\times \pi)\circ T_X$\,;
%\item $\phi=\phi_A:A\to \LL(X)$ for the left-module map;
\item $J_X$ for the Katsura ideal of $A$, characterized as the largest ideal of $A$ that $\phi_A$ maps injectively into $\KK(X)$; as usual, $\KK(X)$ denotes the $C^*$-algebra of compact operators on $X$;
\item $\psi^{(1)}:\KK(X)\to B$ for the homomorphism associated to a Toeplitz representation $(\psi,\pi)$ of $(X,A)$ in $B$.
\end{itemize}

We will also assume throughout this paper that a locally compact group $G$ acts on $(X,A)$, i.e., there exists a continuous action $\gamma$ of   $G$ on $X$ that is compatible with a continuous action $\alpha$ of $G$ on $A$. The full crossed product is the completion $(X\rtimes_\gamma G,A\rtimes_\alpha G)$ of the pre-correspondence $(C_c(G,X),C_c(G,A))$ with operations
\begin{align*}
(f\cdot\xi)(s)&=\int_G f(t)\cdot\gamma_t\bigl(\xi(t\inv s)\bigr)\,dt\\
(\xi \cdot f)(s)&=\int_G \xi(t)\cdot\alpha_t\bigl(f(t^{-1}s)\bigr)\,dt\\
\inn \xi \eta (s)&=\int_G \alpha_{t^{-1}}\bigl(\bigl\<\xi(t),\eta(ts)\bigr\>\bigr)\,dt
\end{align*}
for $f,g\in C_c(G,A)$ and $\xi,\eta\in C_c(G,X)$. The reduced crossed product correspondence $(X\rtimes_{\gamma, r}G,A\rtimes_{\alpha, r}G)$ is similarly defined.
(We refer to, e.g., 
\cite{taco}, 
\cite{HN},
\cite[Chapters~2 and 3]{enchilada}, and
\cite{K}
for the elementary theory of actions and crossed products for correspondences.)

By universality of $\TT_X$ (resp.\ $\OO_X$), there is a continuous action $\tilde\beta$ (resp.\ $\beta$) of $G$ on $\TT_X$ (resp.\ $\OO_X$) determined by 
\begin{align*}
\tilde\beta_g \circ t_X &= t_X \circ \gamma_g \,, \,  \tilde\beta_g \circ t_A = t_A \circ \alpha_g\\
(\text{resp.}\quad\beta_g \circ k_X &= k_X \circ \gamma_g \,, \,  \beta_g \circ k_A = k_A \circ \alpha_g)
\end{align*}
%$$\text{\hspace{6ex}}\tilde\beta_g \circ t_X = t_X \circ \gamma_g \,, \,  \tilde\beta_g \circ t_A = t_A \circ \alpha_g$$
%$$\text{ (resp. } \, \beta_g \circ k_X = k_X \circ \gamma_g \,, \,  \beta_g \circ k_A = k_A \circ \alpha_g\text{)}$$ 
for all $g\in G$. Clearly, we have $T_X\circ \tilde\beta_g =\beta_g \circ T_X$ for all $g\in G$; in other words, the quotient map $T_X$ is $G$-equivariant.  Whenever possible without confusing the reader, we will suppress $\gamma$, $\alpha$, $\tilde\beta$ and $\beta$ in our notation. For instance,  $(X\rtimes G, A\rtimes G)$ will denote the full crossed product correspondence, while the reduced one will be denoted by $(X\rtimes_r G, A\rtimes_r G)$. Our notation concerning $C^*$-crossed products will follow \cite{danacrossed}.  We write:
\begin{itemize}
\item $(i_A,i_G)$ for the canonical homomorphism of $(A,G)$ in the multiplier algebra $M(A\rtimes G)$;
\item $\rho\times u:A\rtimes G\to M(B)$ for the homomorphism associated to a covariant homomorphism $(\rho,u)$ of $(A,G)$ in $M(B)$ for a $C^*$-algebra $B$;
\item $(i_X,i_A)$ for the canonical homomorphism of $(X,A)$ in the crossed-product correspondence $(M(X\rtimes G),M(A\rtimes G))$;
\end{itemize}

We must apply the Gauge-Invariant Uniqueness Theorem for Toeplitz algebras
\cite[Theorem~6.2]{KatsuraCorrespondence}, which states that if $(\psi,\pi):(X,A)\to B$ is a  Toeplitz representation
that carries a gauge action, i.e.,
for every $z\in\T$ there is an endomorphism $\sigma_z$ of $C^*(\psi, \pi)$ such that
\begin{align*}
&\sigma_z(\psi(\xi))=z\, \psi(\xi)\righttext{for all}\xi\in X,\\
&\sigma_z(\pi(a))=\pi(a)\righttext{for all}a\in A,
\end{align*}
and
for which the ideal
\[
I'_{(\psi,\pi)}:=\{a\in A:\pi(a)\in \psi^{(1)}(\KK(X))\}
\]
is trivial,
then the associated homomorphism $\psi \times_\TT \pi : \TT_X\to C^*(\psi, \pi)$ is an isomorphism.
As pointed out in \cite{KatsuraCorrespondence}, $I'_{(\psi,\pi)}=\{0\}$ implies that $\pi$, and hence $\psi$, is injective,
and the other assumptions imply that in fact $\sigma$ is a continuous action of $\T$ on $B$.

We will also use that  $\OO_X$ may be characterized without using $J_X$ and the notion of Cuntz-Pimsner covariant representations (cf.\ \cite[Proposition 7.14]{KatsuraIdeal}): If $(\psi,\pi):(X,A)\to B$ is an injective Toeplitz representation
that carries a gauge action, then there exists a surjective homomorphism $\rho: C^*(\psi, \pi) \to \OO_X$ with $k_X = \rho \circ \psi, \, k_A = \rho \circ \pi$. It immediately follows that 
$T_X = \rho \circ (\psi \times_\TT \pi)$ 
%$T_X = (\psi \times_\TT \pi) \circ \rho$ 
in this case. 

Recall from 
\cite[Definition~A.3]{dkq} that for a nondegenerate homomorphism $\pi:A\to M(B)$
the \emph{$A$-multipliers} of $B$ are given by
\[
M_A(B)=\{m\in M(B):\pi(A)\cdot m,m\cdot \pi(A)\subset B\},
\]
and from
(a special case of) \cite[Definition~A.8]{dkq} that for a nondegenerate correspondence $(Y,C)$
the \emph{$C$-multipliers} of $Y$ are given by
\[
M_C(Y)=\{m\in M(Y):C\cdot m,m\cdot C\subset Y\}.
\]

\begin{lem}\label{extend}
%Let $(X,A)$ be a nondegenerate correspondence, and 
Let $(\psi,\pi):(X,A)\to M(B)$ be a Toeplitz representation with $\pi$ nondegenerate.
Then $(\psi,\pi)$ extends uniquely to an $A$-strictly continuous Toeplitz representation
\[
(\bar\psi,\bar\pi):(M_A(X),M(A))\to M_A(B).
\]
In particular, $\bar\psi:M_A(X)\to M(B)$ is $A$-strict to strictly continuous.
\end{lem}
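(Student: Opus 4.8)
The plan is to construct $(\bar\psi,\bar\pi)$ by hand from the nondegeneracy of $\pi$, rather than from any abstract functoriality. Since $\pi:A\to M(B)$ is nondegenerate it has a unique strictly continuous unital extension $\bar\pi:M(A)\to M(B)$, and this is the coefficient map we take. All the content is in producing $\bar\psi$ on $M_A(X)$ and checking that it assembles with $\bar\pi$ into an $A$-strictly continuous Toeplitz representation.

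For $m\in M_A(X)$ and $a\in A$ the definition of $M_A(X)$ gives $m\cdot a\in X$ and $a\cdot m\in X$, so $\psi(m\cdot a)$ and $\psi(a\cdot m)$ already lie in $M(B)$. Using that $\pi(A)B$ and $B\,\pi(A)$ are dense in $B$, I would define $\bar\psi(m)\in M(B)$ by prescribing its two one-sided actions
\[
\bar\psi(m)\bigl(\pi(a)b\bigr)=\psi(m\cdot a)\,b,\qquad \bigl(b\,\pi(a)\bigr)\bar\psi(m)=b\,\psi(a\cdot m).
\]
The step on which everything rests is that these are well defined and bounded. For the first this amounts to showing $\sum_i\pi(a_i)b_i=0$ implies $\sum_i\psi(m\cdot a_i)b_i=0$, which follows from the inner-product relation for $(\psi,\pi)$: since $\inn{m\cdot a_i}{m\cdot a_j}=a_i^{*}\inn{m}{m}a_j$ in $M(A)$, one gets
\[
\Bigl\|\sum_i\psi(m\cdot a_i)b_i\Bigr\|^2=\Bigl\|\Bigl(\sum_i\pi(a_i)b_i\Bigr)^{*}\bar\pi\bigl(\inn{m}{m}\bigr)\Bigl(\sum_j\pi(a_j)b_j\Bigr)\Bigr\|\le\|m\|^2\,\Bigl\|\sum_i\pi(a_i)b_i\Bigr\|^2.
\]
Thus the first action is well defined with norm at most $\|m\|$. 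The second action is handled by the symmetric computation, now expressing $\psi(a_i\cdot m)\psi(a_j\cdot m)^{*}=\psi^{(1)}(\theta_{a_i\cdot m,\,a_j\cdot m})$ with the rank-one operator $\theta_{a_i\cdot m,\,a_j\cdot m}\in\KK(X)$; note that it is precisely the two requirements $m\cdot a\in X$ and $a\cdot m\in X$ defining $M_A(X)$ that make the two formulas meaningful.

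A short calculation with the covariance relations then shows that the two prescriptions satisfy the centralizer identity $\bigl(b_1\bar\psi(m)\bigr)b_2=b_1\bigl(\bar\psi(m)b_2\bigr)$, so $\bar\psi(m)$ is a genuine two-sided multiplier; taking $m=\xi\in X$ recovers $\psi(\xi)$, so $\bar\psi$ extends $\psi$. I would then verify that $(\bar\psi,\bar\pi)$ is a Toeplitz representation of $(M_A(X),M(A))$ by checking $\bar\pi(S)\bar\psi(m)=\bar\psi(S\cdot m)$ and $\bar\psi(m)\bar\pi(S)=\bar\psi(m\cdot S)$ for $S\in M(A)$, together with $\bar\psi(m)^{*}\bar\psi(n)=\bar\pi(\inn{m}{n})$; each is obtained by applying both sides to a vector $\pi(a)b$, invoking the corresponding relation for $(\psi,\pi)$, and using strict continuity of $\bar\pi$. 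The same applications show $\bar\psi(m)$ carries $M_A(B)$ into itself, i.e.\ that the representation is $M_A(B)$-valued in the sense of \cite{dkq}.

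Finally, for uniqueness and the continuity assertion I would use that $X$ is $A$-strictly dense in $M_A(X)$: if $m_\lambda\to m$ $A$-strictly then $m_\lambda\cdot a\to m\cdot a$ and $a\cdot m_\lambda\to a\cdot m$ in $X$, so the displayed formulas together with the uniform bound $\|\bar\psi(m_\lambda)\|\le\|m_\lambda\|$ give $\bar\psi(m_\lambda)\to\bar\psi(m)$ strictly in $M(B)$; hence $\bar\psi$ is $A$-strict to strictly continuous, and since any $A$-strictly continuous extension must agree with $\psi$ on the dense subset $X$, it is unique. I expect the one genuine obstacle to be the boundedness and well-definedness estimate above — everything else is a matter of transporting the relations for $(\psi,\pi)$ across the dense set $\pi(A)B$ — with a secondary bookkeeping point in checking that the values land in $M_A(B)$ and not merely in $M(B)$.
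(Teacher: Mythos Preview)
Your approach is correct in spirit but genuinely different from the paper's: the authors do not construct anything --- they simply invoke \cite[Corollary~A.13]{dkq} for the extension and \cite[Corollary~A.4~(1)]{dkq} for the continuity. Your direct construction via the dense subspace $\pi(A)B$ is the standard way such extension results are actually \emph{proved}, so in effect you are reproving part of the appendix of \cite{dkq} rather than citing it. The payoff of your route is self-containment; the paper's route is brevity and the ability to lean on the general multiplier-bimodule machinery already set up in \cite{dkq}.

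One point to tighten: your ``symmetric computation'' for the right action does not go through as stated. The identity $\psi(a_i\cdot m)\psi(a_j\cdot m)^{*}=\psi^{(1)}(\theta_{a_i\cdot m,\,a_j\cdot m})$ is fine, but unlike the left-side estimate there is no clean factorisation of $\theta_{a_i\cdot m,\,a_j\cdot m}$ through $\pi(a_i)$ and $\pi(a_j)^{*}$ without extending $\psi^{(1)}$ to $\LL(X)$, and $\psi^{(1)}$ need not be nondegenerate. A cleaner fix: with $d=\sum_i b_i\psi(a_i\cdot m)\in B$ and $c=\sum_i b_i\pi(a_i)$, observe that for every $a\in A$
\[
d\,\pi(a)=\sum_i b_i\,\psi\bigl((a_i\cdot m)\cdot a\bigr)=\sum_i b_i\,\pi(a_i)\,\psi(m\cdot a)=c\,\psi(m\cdot a),
\]
so $\|d\,\pi(a)\|\le\|c\|\,\|m\|\,\|a\|$; taking $a$ through a bounded approximate unit and using nondegeneracy of $\pi$ gives $\|d\|\le\|m\|\,\|c\|$, which is exactly the bound you need. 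With this adjustment the rest of your outline (centralizer compatibility, the Toeplitz relations on $\pi(A)B$, $A$-strict density of $X$ in $M_A(X)$ for uniqueness and continuity) goes through.
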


\begin{proof}
The first statement is a very special case of \cite[Corollary~A.13]{dkq}, and the second then follows by \cite[Corollary~A.4 (1)]{dkq}.
\end{proof}

\begin{notn}
We will have to extend Toeplitz representations using \lemref{extend} quite often,
and to clean up the notation we will suppress the ``bar'', i.e., we will continue to write $(\psi,\pi)$ for the canonical extension to $(M_A(X),M(A))$.
\end{notn}

\begin{cor}\label{compose}
Let $(X,A)$ and $(Y,B)$ be nondegenerate correspondences, 
and let $(\psi,\pi):(X,A)\to (M(Y),M(B))$ be a
correspondence homomorphism,
and let $(\sigma,\tau):(Y,B)\to M(C)$ be a Toeplitz representation.
Assume that
$\psi(X)\subset M_B(Y)$,
and that
$\pi$ and $\tau$ are nondegenerate.
Then the composition
\[
(\sigma\circ\psi,\tau\circ\pi):(X,A)\to M(C)
\]
is a Toeplitz representation,
and the associated homomorphism
\[
(\sigma\circ\psi)\times_\TT (\tau\circ\pi):\TT_X\to M(C)
\]
is nondegenerate.
\end{cor}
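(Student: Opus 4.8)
The plan is to reduce everything to \lemref{extend} together with the defining identities of a correspondence homomorphism and a Toeplitz representation. Since $\tau$ is nondegenerate, I would first invoke \lemref{extend} for the Toeplitz representation $(\sigma,\tau):(Y,B)\to M(C)$, extending it to a Toeplitz representation $(\sigma,\tau):(M_B(Y),M(B))\to M_B(C)$ (continuing to suppress the ``bar'', as agreed above). Because $\psi(X)\subset M_B(Y)$ by hypothesis and $\pi(A)\subset M(B)$ automatically, the compositions $\sigma\circ\psi:X\to M_B(C)\subset M(C)$ and $\tau\circ\pi:A\to M(C)$ are then well defined.

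Next I would verify the Toeplitz axioms for $(\sigma\circ\psi,\tau\circ\pi)$ by peeling off one map at a time: the correspondence-homomorphism identities for $(\psi,\pi)$ convert structure in $(X,A)$ into structure in $(M_B(Y),M(B))$, and the extended Toeplitz identities for $(\sigma,\tau)$ then carry this into $M(C)$. Concretely, $\tau\circ\pi$ is a homomorphism; the module identity $(\sigma\circ\psi)(\xi\cdot a)=(\sigma\circ\psi)(\xi)\,(\tau\circ\pi)(a)$ follows from $\psi(\xi\cdot a)=\psi(\xi)\cdot\pi(a)$; the inner-product identity $(\sigma\circ\psi)(\xi)^*(\sigma\circ\psi)(\eta)=(\tau\circ\pi)(\langle\xi,\eta\rangle)$ follows from $\langle\psi(\xi),\psi(\eta)\rangle=\pi(\langle\xi,\eta\rangle)$; and the left-action identity $(\tau\circ\pi)(a)\,(\sigma\circ\psi)(\xi)=(\sigma\circ\psi)(a\cdot\xi)$ follows from $\psi(a\cdot\xi)=\pi(a)\cdot\psi(\xi)$. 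None of these is difficult once the extension is in place.

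For the final claim I would argue nondegeneracy in two short steps. First, $\tau\circ\pi$ is nondegenerate as a composite of nondegenerate homomorphisms: using multiplicativity of the extension, $\overline{(\tau\circ\pi)(A)\,C}\supset\overline{\tau\bigl(\pi(A)\,B\bigr)\,C}=\overline{\tau(B)\,C}=C$, by nondegeneracy of $\pi$ and then of $\tau$. Second, writing $\Phi=(\sigma\circ\psi)\times_\TT(\tau\circ\pi):\TT_X\to M(C)$, we have $\Phi\circ t_A=\tau\circ\pi$, so $\Phi(\TT_X)\supset(\tau\circ\pi)(A)$; hence $\overline{\Phi(\TT_X)\,C}\supset\overline{(\tau\circ\pi)(A)\,C}=C$, and $\Phi$ is nondegenerate.

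The main obstacle, such as it is, is purely bookkeeping at the multiplier level: one must check that \lemref{extend} genuinely applies (this is exactly the role of the nondegeneracy of $\tau$), and recognize that the hypothesis $\psi(X)\subset M_B(Y)$ is precisely what legitimizes the composite $\sigma\circ\psi$, since $\psi$ a priori lands only in $M(Y)$, where the unextended $\sigma$ is not defined. Once the extension is set up correctly, the verification of the Toeplitz axioms and of nondegeneracy is routine.
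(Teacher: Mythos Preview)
Your proposal is correct and follows exactly the same approach as the paper's proof: apply \lemref{extend} to extend $(\sigma,\tau)$ to $(M_B(Y),M(B))$, then compose with $(\psi,\pi)$ and verify the Toeplitz axioms, with nondegeneracy coming from that of $\tau\circ\pi$. You have simply spelled out in detail what the paper dismisses as ``routine to verify'' and ``follows since $\tau\circ\pi$ is nondegenerate.''
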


\begin{proof}
The hypotheses allow us to apply \lemref{extend} to get a Toeplitz representation
\[
(\sigma,\tau):(M_B(Y),M(B))\to M(C),
\]
and it is routine to verify that the composition is a Toeplitz representation.
The nondegeneracy follows since $\tau\circ\pi:A\to M(C)$ is nondegenerate.
\end{proof}

\begin{cor}\label{generate toeplitz}
Let $(X,A)$ and $(Y,B)$ be nondegenerate correspondences, 
and let $(\psi,\pi):(X,A)\to (M(Y),M(B))$ be a
correspondence homomorphism.
Assume that
$\psi(X)\subset M_B(Y)$
and
$\pi$ is nondegenerate.
Then the composition
\[
(t_Y\circ\psi,t_B\circ\pi):(X,A)\to M(\TT_Y)
\]
is a Toeplitz representation,
and the associated homomorphism
\[
(t_Y\circ\psi)\times_\TT (t_B\circ\pi):\TT_X\to M(\TT_Y)
\]
is nondegenerate,
and is faithful if $\pi$ is.
\end{cor}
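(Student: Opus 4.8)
The plan is to obtain the first two assertions directly from \corref{compose}, and then to deduce faithfulness from the Gauge-Invariant Uniqueness Theorem recorded above.

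First I would apply \corref{compose} with $C=\TT_Y$ and with $(\sigma,\tau)$ taken to be the universal Toeplitz representation $(t_Y,t_B)$ of $(Y,B)$ in $\TT_Y$. The only hypothesis of \corref{compose} not already assumed is that $t_B$ be nondegenerate, and this I would check using nondegeneracy of the correspondence $(Y,B)$: since $\TT_Y=C^*(t_Y(Y),t_B(B))$ and the Toeplitz relations give $t_B(b)t_Y(\eta)=t_Y(\phi_B(b)\eta)$, an approximate identity $(e_\lambda)$ of $B$ satisfies $t_B(e_\lambda)x\to x$ on each of the generating factors $t_Y(\eta),t_Y(\eta)^*,t_B(b)$, whence $\overline{t_B(B)\TT_Y}=\TT_Y$. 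With this in hand, \corref{compose} immediately yields that $(t_Y\circ\psi,t_B\circ\pi)$ is a Toeplitz representation and that $(t_Y\circ\psi)\times_\TT(t_B\circ\pi)$ is nondegenerate.

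For faithfulness it suffices, by the Gauge-Invariant Uniqueness Theorem, to verify that $(t_Y\circ\psi,t_B\circ\pi)$ carries a gauge action and that its ideal $I'_{(t_Y\circ\psi,t_B\circ\pi)}$ is trivial. The gauge action is inherited from the canonical gauge action $\sigma^Y$ on $\TT_Y$: extending each $\sigma^Y_z$ strictly to $M(\TT_Y)$ and using that $t_Y(m)=\lim_\lambda t_Y(m)t_B(e_\lambda)$ strictly for $m\in M_B(Y)$ (and similarly that $t_B(b')=\lim_\lambda t_B(b'e_\lambda)$ strictly for $b'\in M(B)$), one obtains $\sigma^Y_z((t_Y\circ\psi)(\xi))=z\,(t_Y\circ\psi)(\xi)$ and $\sigma^Y_z((t_B\circ\pi)(a))=(t_B\circ\pi)(a)$, so that $\sigma^Y$ restricts to a gauge action on $C^*(t_Y\circ\psi,t_B\circ\pi)$.

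The main work, and the step I expect to be the real obstacle, is showing $I'_{(t_Y\circ\psi,t_B\circ\pi)}=\{0\}$, since it requires bookkeeping at the multiplier level. I would first record that $(t_Y\circ\psi)^{(1)}(\theta_{\xi,\eta})=t_Y(\psi(\xi))t_Y(\psi(\eta))^*=t_Y^{(1)}\bigl(\theta_{\psi(\xi),\psi(\eta)}\bigr)$, where $\psi(\xi),\psi(\eta)\in M_B(Y)$ and $t_Y^{(1)}$ has been extended strictly to $\LL(Y)=M(\KK(Y))$; the difficulty is that $\theta_{\psi(\xi),\psi(\eta)}$ lies \emph{a priori} only in $\LL(Y)$, not in $\KK(Y)$. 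Now suppose $a\in I'_{(t_Y\circ\psi,t_B\circ\pi)}$, say $t_B(\pi(a))=t_Y^{(1)}(R)$ with $R\in\psi^{(1)}(\KK(X))\subset\LL(Y)$. Multiplying on each side by $t_B(b_1)$ and $t_B(b_2)$ for $b_1,b_2\in B$ and using the identities $t_B(b)t_Y^{(1)}(S)=t_Y^{(1)}(\phi_B(b)S)$ and $t_Y^{(1)}(S)t_B(b)=t_Y^{(1)}(S\phi_B(b))$, I would obtain
\[
t_B(b_1\pi(a)b_2)=t_Y^{(1)}\bigl(\phi_B(b_1)\,R\,\phi_B(b_2)\bigr).
\]
The key point is that $\phi_B(b_1)\,\theta_{\psi(\xi),\psi(\eta)}\,\phi_B(b_2)=\theta_{b_1\cdot\psi(\xi),\,b_2^*\cdot\psi(\eta)}$ has both entries in $Y$ (because $\psi(\xi),\psi(\eta)\in M_B(Y)$), so by linearity and continuity $\phi_B(b_1)\,R\,\phi_B(b_2)\in\KK(Y)$; hence $b_1\pi(a)b_2\in I'_{(t_Y,t_B)}$. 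Since the universal Toeplitz representation satisfies $I'_{(t_Y,t_B)}=\{0\}$, I conclude $b_1\pi(a)b_2=0$ for all $b_1,b_2\in B$, whence $\pi(a)=0$ and finally $a=0$ by injectivity of $\pi$. Feeding the gauge action and the equality $I'_{(t_Y\circ\psi,t_B\circ\pi)}=\{0\}$ into the Gauge-Invariant Uniqueness Theorem then shows that $(t_Y\circ\psi)\times_\TT(t_B\circ\pi)$ is an isomorphism onto $C^*(t_Y\circ\psi,t_B\circ\pi)$, in particular faithful.
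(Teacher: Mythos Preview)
Your proof is correct and follows essentially the same route as the paper: invoke \corref{compose} for the first two assertions, then apply the Gauge-Invariant Uniqueness Theorem, using the gauge action on $\TT_Y$ and reducing to $I'_{(t_Y,t_B)}=\{0\}$ by multiplying $t_B(\pi(a))$ by elements of $t_B(B)$ to land in $t_Y^{(1)}(\KK(Y))$. The only differences are cosmetic---you supply explicit details on the nondegeneracy of $t_B$ and the multiplier-level gauge action that the paper leaves implicit, and you multiply on both sides by $b_1,b_2$ whereas the paper multiplies only on the left (noting that $\psi^{(1)}(k)\in M(\KK(Y))$ so $\varphi_B(b)\psi^{(1)}(k)\in\KK(Y)$ already).
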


\begin{proof}
The first two statements follow immediately from \corref{compose},
so assume that $\pi$ is injective.
To show that $(t_Y\circ\psi)\times_\TT (t_B\circ\pi)$ is injective,
our aim is to apply the Gauge-Invariant Uniqueness Theorem \cite[Theorem~6.2]{KatsuraCorrespondence}.
Since $\TT_Y$ has a gauge action, it quickly follows that $(t_Y\circ\psi,t_B\circ\pi)$ carries a gauge action.
For the other part, let $a\in A$, and assume that
$t_B\circ\pi(a)\in (t_Y\circ\psi)^{(1)}(\KK(X))$.
Then there exists $k\in\KK(X)$ such that
for all $b\in B$,
\begin{align*}
t_B(b\pi(a))
&=t_B(b)t_B(\pi(a))
\\&=t_B(b)t_Y^{(1)}(\psi^{(1)}(k))
\\&=t_Y^{(1)}\bigl(\varphi_B(b)\psi^{(1)}(k)\bigr)
\\&\in t_Y^{(1)}(\KK(Y)),
\end{align*}
where the last step follows since the hypotheses imply that
$\psi^{(1)}(k)\in M(\KK(Y))$.
Since
$I'_{(t_Y,t_B)}=\{0\}$
by \cite[Corollary~4.5]{KatsuraCorrespondence},
it follows that
$b\pi(a)=0$ for all $b\in B$, so $\pi(a)=0$, and hence $a=0$ since $\pi$ is injective.
\end{proof}

\section{Toeplitz crossed products}\label{toep sec}

%Now we start over essentially from scratch, considering crossed products of Toeplitz algebras before passing to the Cuntz-Pimsner algebras.
%We assume from now on and in the rest of this paper that $G$ is a locally compact group acting a nondegenerate correspondence $(X,A)$. 
%Let $G$ act on a nondegenerate correspondence $(X,A)$.
We show in this section that everything works as it should in the Toeplitz case: the crossed product of the Toeplitz algebra is the Toeplitz algebra of the crossed product correspondence, both for full and reduced crossed products.

%\medskip 
Since the correspondence homomorphism $(i_X,i_A):(X,A)\to (M(X\rtimes_\gamma G),M(A\rtimes_\alpha G))$ 
satisfies
$i_X(X)\subset M_{A\rtimes G}(X\rtimes G)$
and
$i_A$ is nondegenerate,
by \corref{generate toeplitz}
the composition
\[
(t_{X\rtimes G}\circ i_X,t_{A\rtimes G}\circ i_A):(X,A)\to M(\TT_{X\rtimes G})
\]
is a Toeplitz representation,
and
\[
(t_{X\rtimes G}\circ i_X)\times_\TT (t_{A\rtimes G}\circ i_A)
:\TT_X\to M(\TT_{X\rtimes G})
\]
is nondegenerate (and faithful).
%\jqalert{I guess we should delete this comment about fidelity.}

Then
computations similar to those in the proof of \cite[Proposition~4.2]{kqrfunctor}
show that the pair
\[
\bigl((t_{X\rtimes G}\circ i_X)\times_\TT (t_{A\rtimes G}\circ i_A),t_{A\rtimes G}\circ i_G\bigr)
\]
is
a covariant homomorphism of $(\TT_X,G)$
in $M(\TT_{X\rtimes G})$.

\begin{thm}\label{toep full}
%If $G$ acts on a nondegenerate correspondence $(X,A)$,
%then with the above notation
The integrated form
\begin{multline*}
\Phi:=
\bigl((t_{X\rtimes G}\circ i_X)\times_\TT (t_{A\rtimes G}\circ i_A)\bigr)\times
(t_{A\rtimes G}\circ i_G):
\\
\TT_X\rtimes G\to M(\TT_{X\rtimes G})
\end{multline*}
of the above covariant pair is an isomorphism onto $\TT_{X\rtimes G}$.
\end{thm}

\begin{proof}
Since 
$(t_{X\rtimes G}\circ i_X)\times_\TT (t_{A\rtimes G}\circ i_A)$
is nondegenerate, so is $\Phi$.
We will construct an inverse for $\Phi$.

The universal Toeplitz representation $(t_X,t_A):(X,A)\to \TT_X$ is $G$-equivariant,
so the crossed product gives a homomorphism
\[
\Theta:=(t_X\rtimes G)\times_\TT (t_A\rtimes G):\TT_{X\rtimes G}\to M(\TT_X\rtimes G)
\]
that is nondegenerate because $t_A\rtimes G$ is.

For $x\in X$ we have
\begin{align*}
\Phi\circ \Theta\circ t_{X\rtimes G}\circ i_X(x)
&=\Phi\circ (t_X\rtimes G)\circ i_X(x)
\\&=\Phi\circ i_{\TT_X}\circ t_X(x)
\\&=\bigl((t_{X\rtimes G}\circ i_X)\times_\TT (t_{A\rtimes G}\circ i_A)\bigr)\circ t_X(x)
\\&=t_{X\rtimes G}\circ i_X(x),
\end{align*}
for $a\in A$ we have
\begin{align*}
\Phi\circ \Theta\circ t_{A\rtimes G}\circ i_A(a)
&=\Phi\circ (t_A\rtimes G)\circ i_A(a)
\\&=\Phi\circ i_{\TT_X}\circ t_A(a)
\\&=\bigl((t_{X\rtimes G}\circ i_X)\times_\TT (t_{A\rtimes G}\circ i_A)\bigr)\circ t_A(a)
\\&=t_{A\rtimes G}\circ i_A(a),
\end{align*}
and for $s\in G$ we have
\begin{align*}
\Phi\circ \Theta\circ t_{A\rtimes G}\circ i_G^A(s)
&=\Phi\circ (t_A\rtimes G)\circ i_G^A(s)
\\&=\Phi\circ i_G^{\TT_X}(s)
\\&=t_{A\rtimes G}\circ i_G^A(s).
\end{align*}
It follows that
$\Phi\circ\Theta$ is the identity on $\TT_{X\rtimes G}$.

On the other hand,
for $x\in X$ we have
\begin{align*}
\Theta\circ \Phi\circ i_{\TT_X}\circ t_X(x)
&=\Theta\circ \bigl((t_{X\rtimes G}\circ i_X)\times_\TT (t_{A\rtimes G}\circ i_A)\bigr)\circ t_X(x)
\\&=\Theta\circ t_{X\rtimes G}\circ i_X(x)
\\&=(t_X\rtimes G)\circ i_X(x)
\\&=i_{\TT_X}\circ t_X(x),
\end{align*}
for $a\in A$ we have
\begin{align*}
\Theta\circ \Phi\circ i_{\TT_X}\circ t_A(a)
&=\Theta\circ \bigl((t_{X\rtimes G}\circ i_X)\times_\TT (t_{A\rtimes G}\circ i_A)\bigr)\circ t_A(a)
\\&=\Theta\circ t_{A\rtimes G}\circ i_A(a)
\\&=(t_A\rtimes G)\circ i_A(a)
\\&=i_{\TT_X}\circ t_A(a),
\end{align*}
and for $s\in G$ we have
\begin{align*}
\Theta\circ \Phi\circ i_G^{\TT_X}(s)
&=\Theta\circ t_{A\rtimes G}\circ i_G^A(s)
\\&=(t_A\rtimes G)\circ i_G^A(s)
\\&=i_G^{\TT_X}(s).
\end{align*}
It follows that $\Theta\circ \Phi$ is the identity on $\TT_X\rtimes G$,
and we have now shown that $\Theta$ is an inverse for $\Phi$.
\end{proof}

\begin{thm}\label{toep red}
%If $G$ acts on a nondegenerate correspondence $(X,A)$,
%then 
There is a unique isomorphism $\Phi_r$ making the diagram
\[
\xymatrix@C+20pt{
\TT_X\rtimes G \ar[r]^-\Phi_-\simeq \ar[d]_{\Lambda_{\TT_X}}
&\TT_{X\rtimes G} 
\ar[d]^{(t_{X\rtimes_r G}\circ \Lambda_X)\times_\TT (t_{A\rtimes_r G}\circ \Lambda_A)}
\\
\TT_X\rtimes_r G \ar@{-->}[r]_-{\Phi_r}^-\simeq
&\TT_{X\rtimes_r G}
}
\]
commute,
where $\Phi$ is the isomorphism from \thmref{toep full}.
\end{thm}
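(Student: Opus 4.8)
The uniqueness of $\Phi_r$ is immediate: since $\Lambda_{\TT_X}$ is surjective, any two homomorphisms making the square commute must agree. Write $\Psi:=(t_{X\rtimes_r G}\circ \Lambda_X)\times_\TT (t_{A\rtimes_r G}\circ \Lambda_A)$ for the right-hand vertical map; it is surjective onto $\TT_{X\rtimes_r G}$ because $\Lambda_X,\Lambda_A$ are surjective and $t_{X\rtimes_r G},t_{A\rtimes_r G}$ generate $\TT_{X\rtimes_r G}$. The plan is \emph{not} to construct $\Phi_r$ directly (the reduced crossed product carries no universal property to exploit), but to build the would-be inverse functorially and then descend.

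The $G$-equivariant Toeplitz representation $(t_X,t_A)$ is a $G$-equivariant correspondence homomorphism $(X,A)\to(\TT_X,\TT_X)$, so by functoriality of the reduced crossed product it induces a Toeplitz representation $(t_X\rtimes_r G,\,t_A\rtimes_r G)$ of $(X\rtimes_r G,A\rtimes_r G)$ in $M(\TT_X\rtimes_r G)$, with integrated form
\[
\Theta_r:=(t_X\rtimes_r G)\times_\TT(t_A\rtimes_r G)\colon \TT_{X\rtimes_r G}\to M(\TT_X\rtimes_r G),
\]
nondegenerate (because $t_A\rtimes_r G$ is) and with range $\TT_X\rtimes_r G$. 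I would then record the naturality of the regular representation for $(t_X,t_A)$, i.e.\ $\Lambda_{\TT_X}\circ(t_X\rtimes G)=(t_X\rtimes_r G)\circ\Lambda_X$ and likewise for $t_A$. Checking on the generators $t_{X\rtimes G}(X\rtimes G)$ and $t_{A\rtimes G}(A\rtimes G)$ of $\TT_{X\rtimes G}$, this gives $\bar\Lambda_{\TT_X}\circ\Theta=\Theta_r\circ\Psi$, where $\Theta=(t_X\rtimes G)\times_\TT(t_A\rtimes G)$ is the inverse of $\Phi$ from \thmref{toep full}. Composing on the right with $\Phi$ and using $\Theta\circ\Phi=\id$ yields the key identity $\Theta_r\circ\Psi\circ\Phi=\Lambda_{\TT_X}$.

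Granting that $\Theta_r$ is injective, the theorem follows formally. Indeed, injectivity of $\Theta_r$ together with the key identity gives $\ker(\Psi\circ\Phi)=\ker(\Theta_r\circ\Psi\circ\Phi)=\ker\Lambda_{\TT_X}$; since $\Lambda_{\TT_X}$ is surjective, $\Psi\circ\Phi$ factors through a unique homomorphism $\Phi_r\colon\TT_X\rtimes_r G\to\TT_{X\rtimes_r G}$ with $\Phi_r\circ\Lambda_{\TT_X}=\Psi\circ\Phi$, so the square commutes. The same kernel computation shows $\Phi_r$ is injective, and $\Phi_r$ is surjective because $\Psi\circ\Phi$ is; hence $\Phi_r$ is an isomorphism, with $\Theta_r$ corestricting to its inverse.

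The main obstacle is therefore to prove that $\Theta_r$ is injective, and for this I would apply the Gauge-Invariant Uniqueness Theorem exactly as in \corref{generate toeplitz}. The gauge action $\sigma$ on $\TT_X$ commutes with $\tilde\beta$, so it descends to $\TT_X\rtimes_r G$ and makes $(t_X\rtimes_r G,t_A\rtimes_r G)$ carry a gauge action; the remaining hypothesis to check is $I'_{(t_X\rtimes_r G,\,t_A\rtimes_r G)}=\{0\}$. To verify this I would use the canonical faithful conditional expectations $E_A\colon A\rtimes_r G\to A$ and $E\colon\TT_X\rtimes_r G\to\TT_X$, which satisfy $E\circ(t_A\rtimes_r G)=t_A\circ E_A$. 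The genuinely technical point is to check that $E$ maps $(t_X\rtimes_r G)^{(1)}(\KK(X\rtimes_r G))$ into $t_X^{(1)}(\KK(X))$; granting this, if $c\in I'_{(t_X\rtimes_r G,\,t_A\rtimes_r G)}$ then $c^*c$ lies in the same ideal, so applying $E$ to $(t_A\rtimes_r G)(c^*c)\in(t_X\rtimes_r G)^{(1)}(\KK(X\rtimes_r G))$ gives $t_A(E_A(c^*c))\in t_X^{(1)}(\KK(X))$, whence $E_A(c^*c)\in I'_{(t_X,t_A)}=\{0\}$ by \cite[Corollary~4.5]{KatsuraCorrespondence}; faithfulness of $E_A$ then forces $c^*c=0$, i.e.\ $c=0$.
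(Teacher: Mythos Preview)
Your overall architecture matches the paper's: construct $\Theta_r=(t_X\rtimes_r G)\times_\TT(t_A\rtimes_r G)$, verify the square with $\Theta$ on top and $\Theta_r$ on the bottom commutes, and prove $\Theta_r$ is an isomorphism via the Gauge-Invariant Uniqueness Theorem, taking $\Phi_r=\Theta_r^{-1}$. The gauge-action check is also the same as the paper's.

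There is, however, a genuine gap in your verification that $I'_{(t_X\rtimes_r G,\,t_A\rtimes_r G)}=\{0\}$. You invoke ``canonical faithful conditional expectations'' $E_A\colon A\rtimes_r G\to A$ and $E\colon\TT_X\rtimes_r G\to\TT_X$, but such expectations exist only when $G$ is \emph{discrete}: for non-discrete $G$ the image $i_A^r(A)$ lies only in $M(A\rtimes_r G)$, not in $A\rtimes_r G$ itself, and there is no conditional expectation onto it. Since the theorem is stated for an arbitrary locally compact group, this step does not go through as written. (For discrete $G$ your argument is fine, and the ``technical point'' about $E$ mapping $(t_X\rtimes_r G)^{(1)}(\KK(X\rtimes_r G))$ into $t_X^{(1)}(\KK(X))$ follows from $\KK(X\rtimes_r G)\cong\KK(X)\rtimes_r G$ and the compatibility $E\circ(t_X^{(1)}\rtimes_r G)=t_X^{(1)}\circ E'$.)

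The paper handles this point for general $G$ via \corref{corona}, which rests on \lemref{quotient}: set $K=t_X^{(1)}(\KK(X))$ and let $C$ be the $C^*$-subalgebra of $\TT_X$ generated by $t_A(A)\cup K$, so $K\triangleleft C$. The hypothesis $I'_{(t_X,t_A)}=\{0\}$ says precisely that $q\circ t_A$ is injective, where $q\colon C\to C/K$ is the quotient map. Since the reduced-crossed-product functor preserves injectivity of equivariant homomorphisms, $(q\circ t_A)\rtimes_r G$ is injective. Now if $b\in I'_{(t_X\rtimes_r G,\,t_A\rtimes_r G)}$, then via $\KK(X\rtimes_r G)\cong\KK(X)\rtimes_r G$ one has $(t_A\rtimes_r G)(b)\in K\rtimes_r G\subset\ker(q\rtimes_r G)$, hence $b\in\ker\bigl((q\rtimes_r G)\circ(t_A\rtimes_r G)\bigr)=\ker\bigl((q\circ t_A)\rtimes_r G\bigr)=\{0\}$. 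This avoids conditional expectations entirely and is what you need to replace your last paragraph with.
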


\begin{proof}
To clarify the notation in the above diagram,
the left-hand vertical map is the regular representation of the crossed product of $\TT_X$,
while the right-hand vertical map is the canonical homomorphism
associated to the Toeplitz representation
\[
(t_{X\rtimes_r G}\circ \Lambda_X,t_{A\rtimes_r G}\circ \Lambda_A)
\]
of the crossed-product correspondence $(X\rtimes G,A\rtimes G)$
in the Toeplitz algebra $\TT_{X\rtimes_r G}$,
where in turn
\[
(\Lambda_X,\Lambda_A):(X\rtimes G,A\rtimes G)\to (X\rtimes_r G,A\rtimes_r G)
\]
is the regular representation of the crossed-product correspondence.

It seems difficult to construct $\Phi_r$ directly,
because it is not easy to decide whether 
the composition
\[
\TT_X\rtimes G\to \TT_{X\rtimes G}\to \TT_{X\rtimes_r G}
\]
factors through the reduced crossed product $\TT_X\rtimes_r G$.
But we can get a map going the other way:
since the universal Toeplitz representation
\[
(t_X,t_A)\to \TT_X
\]
is $G$-equivariant, we get a Toeplitz representation
\[
(t_X\rtimes_r G,t_A\rtimes_r G):(X\rtimes_r G,A\rtimes_r G)\to \TT_X\rtimes_r G
\]
that fits into a commutative diagram
\[
\xymatrix@C+60pt{
\TT_X\rtimes G \ar[d]_{\Lambda_{\TT_X}}
&(X\rtimes G,A\rtimes G) \ar[l]_-{(t_X\rtimes G,t_A\rtimes G)} \ar[d]^{(\Lambda_X,\Lambda_A)}
\\
\TT_X\rtimes_r G
&(X\rtimes_r G,A\rtimes_r G). \ar[l]^-{(t_X\rtimes_r G,t_A\rtimes_r G)}
}
\]
Passing to the canonical homomorphisms associated to the horizontal Toeplitz representations gives the commutative diagram
\[
\xymatrix@C+90pt{
\TT_X\rtimes G \ar[d]_{\Lambda_{\TT_X}}
&\TT_{X\rtimes G} \ar[l]_-{\Theta=(t_X\rtimes G)\times_\TT (t_A\rtimes G)}^-\simeq
\ar[d]^{(t_{X\rtimes_r G}\circ \Lambda_X)\rtimes_\TT (t_{A\rtimes_r G}\circ \Lambda_A)}
\\
\TT_X\rtimes_r G
&\TT_{X\rtimes_r G}, \ar[l]^-{\Theta_r:=(t_X\rtimes_r G)\times_\TT (t_A\rtimes_r G)}
}
\]
where the $\Theta=\Phi\inv$ at the top was introduced in the proof of \thmref{toep full}.

We will show that $\Theta_r$ is an isomorphism, and then its inverse will be the desired $\Phi_r$.
We see that $\Theta_r$ is surjective, because generators go to generators:
for $x\in X$ we have
\begin{align*}
\Theta_r\circ t_{X\rtimes_r G}\circ i_X^r(x)
&=(t_X\rtimes_r G)\circ i_X^r(x)
=i_{T_X}^r\circ t_X(x),
\end{align*}
for $a\in A$ we have
\begin{align*}
\Theta_r\circ t_{A\rtimes_r G}\circ i_A^r(a)
&=(t_A\rtimes_r G)\circ i_A^r(a)
=i_{T_X}^r\circ t_A(a),
\end{align*}
and for $s\in G$ we have
\begin{align*}
\Theta_r\circ t_{A\rtimes_r G}\circ i_G^{A,r}(s)
&=(t_A\rtimes_r G)\circ i_G^{A,r}(s)
=i_G^{\TT_X,r}(s).
\end{align*}

We will show that $\Theta_r$ is injective, and we aim to apply the Gauge-Invariant Uniqueness Theorem for Toeplitz algebras
\cite[Theorem~6.2]{KatsuraCorrespondence}.
Thus, for every $z\in\T$ we need an endomorphism $\sigma'_z$ of $\TT_X\rtimes_r G$ such that
\begin{align*}
\sigma'_z\circ (t_X\rtimes_r G)&=z\,(t_X\rtimes_r G)\\
\sigma'_z\circ (t_A\rtimes_r G)&=t_A\rtimes_r G.
\end{align*}
Let $\sigma$ be the gauge action on $\TT_X$.
Since this action commutes with the action of $G$,
it induces an action $\sigma'=\sigma\rtimes_r G$ on the reduced crossed product $\TT_X\rtimes_r G$,
with
\[
%(\sigma\rtimes_r G)_z=\sigma_z\rtimes_r G\righttext{for}z\in \T.
\sigma'_z=\sigma_z\rtimes_r G\righttext{for}z\in \T.
\]

Let $z\in\T$.
For $\xi\in C_c(G,X)$ we have
\begin{align*}
(\sigma_z\rtimes_r G)\circ (t_X\rtimes_r G)(\xi)
%&=\bigl((\sigma^X_z\circ t_X)\rtimes_r G\bigr)(\xi)
&=\bigl((\sigma_z\circ t_X)\rtimes_r G\bigr)(\xi)
%\\&=\sigma^X_z\circ t_X\circ\xi
\\&=\sigma_z\circ t_X\circ\xi
\\&=z\,(t_X\circ\xi)
\\&=z\,(t_X\rtimes_r G)(\xi),
\end{align*}
and for $f\in C_c(G,A)$ we have
\begin{align*}
(\sigma_z\rtimes_r G)\circ (t_A\rtimes_r G)(f)
%&=\bigl((\sigma^X_z\circ t_A)\rtimes_r G\bigr)(f)
%\\&=\sigma^X_z\circ t_A\circ f
&=\bigl((\sigma_z\circ t_A)\rtimes_r G\bigr)(f)
\\&=\sigma_z\circ t_A\circ f
\\&=t_A\circ f
\\&=(t_A\rtimes_r G)(f).
\end{align*}

Finally, we need to show that the ideal
\begin{multline*}
I'_{(t_X\rtimes_r G,t_A\rtimes_r G)}
\\=\{b\in A\rtimes_r G:(t_A\rtimes_r G)(b)\in (t_X\rtimes_r G)^{(1)}(\KK(X\rtimes_r G)\}
\end{multline*}
of $A\rtimes_r G$ is trivial.
But this follows from \corref{corona} below,
since
$I'_{(t_X,t_A)}=\{0\}$ by \cite[Corollary~4.5]{KatsuraCorrespondence}.
\end{proof}

\begin{rem}
We could have applied the strategy of the above proof to prove \thmref{toep full}, but for full crossed products we were able to find the inverse homomorphism, and we feel that this leads to a more elementary proof.
\end{rem}

\begin{lem}\label{quotient}
Let $(A,\alpha)$ and $(C,\sigma)$ be actions of $G$,
let $\pi:A\to C$ be an $\alpha-\sigma$ equivariant homomorphism,
and let $K$ be a $\sigma$-invariant ideal of $C$.
Denote by 
$q:C\to Q:=C/K$ the quotient map and by
$\tau$ the quotient action of $G$ on $Q$.
%so that $q$ is $\sigma-\tau$ equivariant
%and $q\circ \pi$ is $\alpha-\tau$ equivariant.
If $\ker (q\circ \pi)=\{0\}$,
then $\ker ((q\circ \pi)\rtimes_r G)=\{0\}$.
In particular, if $\ker (q\circ \pi)=\{0\}$ and $b\in A\rtimes_{\alpha,r} G$ satisfies $(\pi\rtimes_r G)(b)\in K\rtimes_{\sigma,r} G$, then $b=0$.
\end{lem}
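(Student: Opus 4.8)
The plan is to recognize the first assertion as the standard functoriality of reduced crossed products for \emph{injective} equivariant homomorphisms, and then to deduce the ``in particular'' clause from the always-valid inclusion $K\rtimes_{\sigma,r}G\subset\ker(q\rtimes_r G)$. Write $\theta:=q\circ\pi:A\to Q$; this is $\alpha$-$\tau$ equivariant and, by hypothesis, injective, and the first claim is precisely that $\theta\rtimes_r G$ is injective.

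To prove injectivity of $\theta\rtimes_r G$ I would pass to regular representations. Choose a faithful nondegenerate representation $\mu:Q\to B(H)$; since $\theta$ is injective, $\rho:=\mu\circ\theta:A\to B(H)$ is a faithful representation of $A$. The induced regular representations $\ind\rho$ and $\ind\mu$ on $L^2(G,H)$ compute the reduced norms on $A\rtimes_{\alpha,r}G$ and $Q\rtimes_{\tau,r}G$ respectively, so both are faithful. A short computation on the dense subalgebra $C_c(G,A)$ --- using that $\theta\rtimes_r G$ is pointwise application of $\theta$, that $\rho=\mu\circ\theta$, and the equivariance $\theta\circ\alpha_s=\tau_s\circ\theta$ --- shows that
\[
\ind\rho=(\ind\mu)\circ(\theta\rtimes_r G)
\]
on $C_c(G,A)$, hence everywhere by continuity. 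Since the left-hand side is faithful, in particular injective, $\theta\rtimes_r G$ must be injective.

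For the final clause, factor $\theta\rtimes_r G=(q\rtimes_r G)\circ(\pi\rtimes_r G)$. Because $q$ annihilates $K$, the map $q\rtimes_r G$ annihilates $C_c(G,K)$ and hence, by continuity, all of $K\rtimes_{\sigma,r}G$; this gives the inclusion $K\rtimes_{\sigma,r}G\subset\ker(q\rtimes_r G)$. Thus if $b\in A\rtimes_{\alpha,r}G$ satisfies $(\pi\rtimes_r G)(b)\in K\rtimes_{\sigma,r}G$, then $(\theta\rtimes_r G)(b)=(q\rtimes_r G)\bigl((\pi\rtimes_r G)(b)\bigr)=0$, and injectivity of $\theta\rtimes_r G$ forces $b=0$.

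I expect the only real subtlety to be the faithfulness of the induced representations together with the displayed identity, i.e., the concrete description of the reduced crossed product via a faithful representation of the coefficient algebra; this is standard (see \cite{danacrossed}) but must be invoked with the correct conventions for $\ind$. It is worth emphasizing where the argument is \emph{not} symmetric: we only ever use the trivial inclusion $K\rtimes_{\sigma,r}G\subset\ker(q\rtimes_r G)$, never the reverse inclusion $\ker(q\rtimes_r G)\subset K\rtimes_{\sigma,r}G$, which would amount to exactness of the sequence $0\to K\rtimes_r G\to C\rtimes_r G\to Q\rtimes_r G\to 0$ and fails for non-exact $G$. Injectivity, by contrast, is preserved by the reduced crossed product functor unconditionally, and this is exactly what makes the lemma hold for arbitrary locally compact $G$.
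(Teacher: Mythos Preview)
Your proof is correct and follows the same approach as the paper: for the ``in particular'' clause you argue exactly as the authors do, factoring $(q\circ\pi)\rtimes_r G=(q\rtimes_r G)\circ(\pi\rtimes_r G)$ and using $K\rtimes_{\sigma,r}G\subset\ker(q\rtimes_r G)$. The only difference is that the paper dismisses the first assertion as ``a well-known property of the reduced-crossed-product functor'', whereas you supply the standard proof via induced regular representations; your added commentary on why exactness is not needed is accurate and useful but not in the original.
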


\begin{proof}
The first statement is a well-known property of the reduced-crossed-product functor.
For the second, just note that
\[
(q\circ \pi)\rtimes_r G=(q\rtimes_r G)\circ (\pi\times_r G)
\]
by functoriality, and
\[
K\rtimes_{\sigma,r} G\subset \ker (q\rtimes_r G)
\]
since $K\subset \ker q$.
\end{proof}

The following result was used in the proof of \thmref{toep red} above;
we give a general formulation since it might be useful elsewhere.

\begin{cor}\label{corona}
Let $(\psi,\pi):(X,A)\to M(D)$ be a Toeplitz representation
that is equivariant for some action 
$\sigma$ 
of $G$ on $D$.
If $I'_{(\psi,\pi)}=\{0\}$, then
$I'_{(\psi\rtimes_r G,\pi\rtimes_r G)}=\{0\}$.
\end{cor}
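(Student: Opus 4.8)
The plan is to deduce this from the ``in particular'' clause of \lemref{quotient}, by manufacturing an auxiliary $G$-$C^*$-algebra equipped with an invariant ideal that encodes the membership condition $\pi(a)\in\psi^{(1)}(\KK(X))$ defining $I'_{(\psi,\pi)}$, and then identifying the reduced analogue of that ideal with the range of $(\psi\rtimes_r G)^{(1)}$. First I would set $K:=\psi^{(1)}(\KK(X))$, which is a $C^*$-subalgebra of $M(D)$ since $\ast$-homomorphisms have closed range, and $E:=\pi(A)+K$. The Toeplitz relations $\pi(a)\psi^{(1)}(k)=\psi^{(1)}(\phi(a)k)$ and $\psi^{(1)}(k)\pi(a)=\psi^{(1)}(k\phi(a))$ show that $K$ absorbs products with $\pi(A)$ on either side, so $E$ is a closed $C^*$-subalgebra of $M(D)$ and $K$ is an ideal of $E$. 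Equivariance of $(\psi,\pi)$ gives $\sigma_g(\pi(a))=\pi(\alpha_g(a))$ and $\sigma_g(\psi^{(1)}(k))=\psi^{(1)}(\gamma_g k\gamma_g\inv)$, so both $E$ and $K$ are $\sigma$-invariant; since the generators have norm-continuous orbits, the restriction of $\sigma$ is a genuine continuous action of $G$ on $E$ leaving $K$ invariant.

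Writing $q:E\to E/K$ for the quotient map, we then have
\[
\ker(q\circ\pi)=\{a\in A:\pi(a)\in K\}=I'_{(\psi,\pi)}=\{0\}.
\]
Applying \lemref{quotient} to the equivariant homomorphism $\pi:A\to E$ and the $\sigma$-invariant ideal $K\triangleleft E$ yields the implication: if $b\in A\rtimes_r G$ satisfies $(\pi\rtimes_r G)(b)\in K\rtimes_r G$, then $b=0$. So everything comes down to showing that the defining condition for membership in $I'_{(\psi\rtimes_r G,\pi\rtimes_r G)}$ forces $(\pi\rtimes_r G)(b)$ into $K\rtimes_r G$.

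This identification is the technical heart. I would use the natural isomorphism $\KK(X\rtimes_r G)\cong\KK(X)\rtimes_r G$, where $\KK(X)$ carries the $G$-action induced by $\gamma$, under which naturality of the $^{(1)}$-construction identifies $(\psi\rtimes_r G)^{(1)}$ with the integrated reduced-crossed-product homomorphism $\psi^{(1)}\rtimes_r G$. Since $\psi^{(1)}$ corestricts to a $G$-equivariant surjection $\KK(X)\to K$, functoriality of the reduced crossed product gives
\[
(\psi\rtimes_r G)^{(1)}(\KK(X\rtimes_r G))=(\psi^{(1)}\rtimes_r G)(\KK(X)\rtimes_r G)=K\rtimes_r G,
\]
realized as the ideal $K\rtimes_r G\triangleleft E\rtimes_r G$ inside $M(D\rtimes_r G)$. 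Feeding this back, any $b$ in $I'_{(\psi\rtimes_r G,\pi\rtimes_r G)}$ has $(\pi\rtimes_r G)(b)\in K\rtimes_r G$, whence $b=0$; thus $I'_{(\psi\rtimes_r G,\pi\rtimes_r G)}=\{0\}$.

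The main obstacle is precisely the displayed identification, which has two delicate ingredients: establishing (or citing) the natural isomorphism $\KK(X\rtimes_r G)\cong\KK(X)\rtimes_r G$ together with the compatibility $(\psi\rtimes_r G)^{(1)}=\psi^{(1)}\rtimes_r G$; and the multiplier-algebra bookkeeping needed to regard $K\rtimes_r G$ as an honest ideal of $E\rtimes_r G$ sitting inside $M(D\rtimes_r G)$, so that the membership hypothesis (which a priori lives in $M(D\rtimes_r G)$) can be pulled back along $E\rtimes_r G\hookrightarrow M(D\rtimes_r G)$. The latter rests on the injectivity and inclusion-preserving properties of the reduced-crossed-product functor for the equivariant inclusion $E\hookrightarrow M(D)$, which is exactly the kind of subtlety \lemref{quotient} is designed to package. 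Everything else reduces to routine verification of the Toeplitz relations and of equivariance of the maps involved.
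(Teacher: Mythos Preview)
Your proposal is correct and follows essentially the same route as the paper: you build the auxiliary $G$-$C^*$-algebra $E=\pi(A)+K$ with $K=\psi^{(1)}(\KK(X))$ as an invariant ideal (the paper writes $C$ for the $C^*$-algebra generated by $\pi(A)\cup K$, which coincides with your $E$ since $K$ is idealized by $\pi(A)$), identify $\ker(q\circ\pi)$ with $I'_{(\psi,\pi)}$, and then invoke \lemref{quotient} after using the isomorphism $\KK(X\rtimes_r G)\cong\KK(X)\rtimes_r G$ to see that $(\psi\rtimes_r G)^{(1)}$ lands in $K\rtimes_r G$. The only cosmetic difference is that the paper cites \cite[Lemma~2.4]{KatsuraCorrespondence} for the idealizer fact where you spell out the Toeplitz relations, and it packages the compatibility $(\psi\rtimes_r G)^{(1)}=\psi^{(1)}\rtimes_r G$ into a commutative diagram rather than discussing it in prose.
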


\begin{proof}
We aim to apply \lemref{quotient} with $K=\psi^{(1)}(\KK(X))$
and $C$ equal to the $C^*$-subalgebra of $M(D)$ generated by $\pi(A)\cup K$.
By \cite[Lemma~2.4]{KatsuraCorrespondence}, $\pi(A)$ idealizes $K$,
so $K$ is an ideal of $C$.

Since $\psi^{(1)}:\KK(X)\to M(D)$ is $\gamma^{(1)}-\sigma$ equivariant, 
$K$ is a $\sigma$-invariant $C^*$-subalgebra of $M(D)$,
and $\sigma$ is strongly continuous on $K$ since the action $\gamma^{(1)}$ is strongly continuous.
Similarly,  $\sigma$ is strongly continuous on $\pi(A)$.
Thus $\sigma$ is strongly continuous on the $*$-subalgebra of $M(D)$ generated by $\pi(A)\cup K$,
and hence on $C$ since a uniform limit of continuous functions is continuous.

With the notation of \lemref{quotient},
the hypothesis is that $\ker (q\circ \pi)=\{0\}$.
Let $b\in I'_{(\psi\rtimes_r G,\pi\rtimes_r G)}$.
Then
\[
(\pi\rtimes_r G)(b)\in (\psi\rtimes_r G)^{(1)}\bigl(\KK(X\rtimes_r G)\bigr).
\]

Using the isomorphism $\KK(X\rtimes_r G)\cong \KK(X)\rtimes_r G$ as in the diagram
\[
\xymatrix@C+20pt{
\KK(X\rtimes_r G) \ar[r]^-{(\psi\rtimes_r G)^{(1)}} \ar[d]_\simeq
&M(D\rtimes_{\sigma,r} G)
\\
\KK(X)\rtimes_{\gamma^{(1)},r} G\ar[r]_-{\psi^{(1)}\rtimes_r G}
&K\rtimes_{\sigma,r} G, \ar[u]_\subset
}
\]
we get
\[
(\pi\rtimes_r G)(b)\in K\rtimes_{\sigma,r} G\subset \ker (q\rtimes_r G),
\]
and hence
\[
b\in \ker (q\rtimes_r G)\circ (\pi\rtimes_r G)=\ker \bigl((q\circ \pi)\rtimes_r G\bigr),
\]
so by \lemref{quotient} we get $b=0$.
\end{proof}

A $C^*$-dynamical system $(A,G)$ is often called \emph{regular} when the left regular representation $\Lambda_A : A\rtimes G \to  A\rtimes_r G$ is injective. This means that the universal norm on $C_c(G,A)$ coincides with the reduced one, so if one regards $A\rtimes G$ and $ A\rtimes_r G$ as completions of $C_c(G, A)$ with respect to these norms, we have $A\rtimes G = A\rtimes_r G$. We therefore often write $A\rtimes G = A\rtimes_r G$ instead of saying that $(A,G)$ is regular. As is well known, see e.g.\  \cite{danacrossed}, $(A, G)$ is regular whenever $G$ is amenable. More generally, $(A, G)$ is regular whenever the action of $G$ on $A$ has Exel's approximation property \cite{exelamenable, exelng}. For other conditions ensuring regularity, we refer to \cite{ana, anan:exact, qs:regularity, BO, bc:regular}. 
  
\begin{thm}\label{toep regular}
%Let $G$ act on a nondegenerate correspondence $(X,A)$.
Suppose that 
$A\rtimes G=A\rtimes_r G$.
Then 
also
\begin{align*}
X\rtimes G&=X\rtimes_r G\righttext{and}\\
%\TT_X\rtimes G&=\TT_X\rtimes_r G.
\TT_X \rtimes G &=  \TT_X \rtimes_r G \simeq \TT_{X \rtimes_r G} = \TT_{X \rtimes G} \,.
\end{align*}
\end{thm}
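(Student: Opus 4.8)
The plan is to reduce everything to the two isomorphism theorems already in hand (\thmref{toep full} and \thmref{toep red}) together with one new observation: that regularity of $(A,G)$ propagates to the module level. First I would establish the module equality $X\rtimes G = X\rtimes_r G$. The regular representation $(\Lambda_X,\Lambda_A):(X\rtimes G,A\rtimes G)\to (X\rtimes_r G,A\rtimes_r G)$ is a correspondence homomorphism, so for $\xi,\eta\in C_c(G,X)$ the $(A\rtimes_r G)$-valued inner product satisfies $\langle \Lambda_X\xi,\Lambda_X\eta\rangle = \Lambda_A(\langle \xi,\eta\rangle)$. Since $\Lambda_A$ is injective by hypothesis, it is isometric, whence $\|\Lambda_X\xi\|^2 = \|\Lambda_A\langle\xi,\xi\rangle\| = \|\langle\xi,\xi\rangle\| = \|\xi\|^2$. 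Thus $\Lambda_X$ is isometric with dense range, and since it intertwines the left actions it is an isomorphism of correspondences, giving $X\rtimes G = X\rtimes_r G$.

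Next, because both the modules and their coefficient algebras coincide, the crossed-product correspondences $(X\rtimes G,A\rtimes G)$ and $(X\rtimes_r G,A\rtimes_r G)$ are literally the same, and hence so are their universal Toeplitz algebras: $\TT_{X\rtimes G}=\TT_{X\rtimes_r G}$, with $t_{X\rtimes G}=t_{X\rtimes_r G}$ and $t_{A\rtimes G}=t_{A\rtimes_r G}$. This already supplies the right-hand equality in the displayed chain.

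It remains to show $\TT_X\rtimes G = \TT_X\rtimes_r G$, i.e.\ that $\Lambda_{\TT_X}$ is an isomorphism, and here I would simply read off the commuting square of \thmref{toep red}. Since $\Phi$ and $\Phi_r$ are isomorphisms, $\Lambda_{\TT_X}=\Phi_r^{-1}\circ V\circ \Phi$, where $V=(t_{X\rtimes_r G}\circ\Lambda_X)\times_\TT (t_{A\rtimes_r G}\circ\Lambda_A)$ is the right-hand vertical map. Under the identifications just made, $\Lambda_X$ and $\Lambda_A$ are identity maps and $\TT_{X\rtimes G}=\TT_{X\rtimes_r G}$, so $V=t_{X\rtimes_r G}\times_\TT t_{A\rtimes_r G}$ is the identity homomorphism of $\TT_{X\rtimes_r G}$. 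Consequently $\Lambda_{\TT_X}=\Phi_r^{-1}\circ\Phi$ is an isomorphism, giving $\TT_X\rtimes G=\TT_X\rtimes_r G$. The middle isomorphism $\TT_X\rtimes_r G\simeq\TT_{X\rtimes_r G}$ in the display is exactly $\Phi_r$ from \thmref{toep red}, completing the chain.

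The main obstacle is really only the first step: making precise that the regular representation of the crossed-product correspondence carries the $(A\rtimes G)$-valued inner products to the $(A\rtimes_r G)$-valued ones via $\Lambda_A$, so that injectivity of $\Lambda_A$ forces $\Lambda_X$ to be isometric. Everything after that is bookkeeping that collapses the diagram of \thmref{toep red} to the identity on its right edge; no new analytic input about $G$ or the gauge action is needed, since regularity has already been assumed for $(A,G)$.
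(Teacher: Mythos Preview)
Your proof is correct and follows essentially the same approach as the paper: first deduce injectivity of $\Lambda_X$ from that of $\Lambda_A$ via the inner-product relation, then use the commuting square of \thmref{toep red} to force $\Lambda_{\TT_X}$ to be an isomorphism. The paper's version is terser---it simply notes that the right-hand vertical map in that square is an isomorphism (once $\Lambda_X,\Lambda_A$ are) and hence so is $\Lambda_{\TT_X}$---whereas you spell out explicitly that this vertical map collapses to the identity; but the underlying argument is the same.
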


\begin{proof}
The regular representation $\Lambda_X$ is injective because its right-coefficient homomorphism $\Lambda_A$ is.
For the other part, consider the commutative diagram
\[
\xymatrix@C+20pt{
\TT_X\rtimes G \ar[r]^-\Phi_-\simeq \ar[d]_{\Lambda_{\TT_X}}
&\TT_{X\rtimes G} 
\ar[d]^{(t_{X\rtimes_r G}\circ \Lambda_X)\times_\TT (t_{A\rtimes_r G}\circ \Lambda_A)}_\simeq
\\
\TT_X\rtimes_r G \ar@{-->}[r]_-{\Phi_r}^-\simeq
&\TT_{X\rtimes_r G}
}
\]
from \thmref{toep red}.
The left-hand vertical map $\Lambda_{\TT_X}$ must be an isomorphism since the other three maps are.
\end{proof}

We include one application concerning nuclearity:
\begin{prop} Assume that the action of $G$ on $A$ has Exel's approximation property.
%\cite{exelng}. 
Then we have
$$\TT_X \rtimes G =  \TT_X \rtimes_r G \simeq \TT_{X \rtimes_r G} = \TT_{X \rtimes G} $$
and these $C^*$-algebras are nuclear whenever $A$ is nuclear.
\end{prop}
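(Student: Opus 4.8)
The plan is to obtain the displayed chain of isomorphisms for free from \thmref{toep regular} and then to isolate nuclearity as the only substantive point. First I would invoke the discussion immediately preceding \thmref{toep regular}, which records that Exel's approximation property for the action of $G$ on $A$ makes $(A,G)$ regular, i.e.\ $A\rtimes G = A\rtimes_r G$. With that equality available, \thmref{toep regular} applies directly and delivers $X\rtimes G = X\rtimes_r G$ together with
\[
\TT_X\rtimes G = \TT_X\rtimes_r G \simeq \TT_{X\rtimes_r G} = \TT_{X\rtimes G}.
\]
This settles the first assertion, and in particular shows that the four $C^*$-algebras in the statement are pairwise isomorphic; hence for the nuclearity claim it is enough to establish that one of them is nuclear.

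For nuclearity I would work with $\TT_{X\rtimes_r G}$, the Toeplitz algebra of the crossed-product correspondence $(X\rtimes_r G, A\rtimes_r G)$, and assemble it from two ingredients. The first is that the approximation property for the action of $G$ on $A$ guarantees, by Exel's work \cite{exelamenable}, that the reduced crossed product $A\rtimes_r G$ is nuclear whenever $A$ is; this is precisely where the hypothesis does its work, the point being that the approximation property makes the action amenable and amenable actions transport nuclearity to the (coinciding full and reduced) crossed product. The second ingredient is that the Toeplitz algebra of a nondegenerate correspondence is nuclear as soon as its coefficient algebra is nuclear, the Toeplitz counterpart of the nuclearity results for correspondence $C^*$-algebras (\cite{KatsuraCorrespondence}).

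Putting these together, when $A$ is nuclear the coefficient algebra $A\rtimes_r G$ of $(X\rtimes_r G, A\rtimes_r G)$ is nuclear, so $\TT_{X\rtimes_r G}$ is nuclear, and transporting nuclearity across the isomorphisms of the first paragraph yields nuclearity of $\TT_X\rtimes G = \TT_X\rtimes_r G = \TT_{X\rtimes G}$ as well. The step I expect to require the most care is the choice of object on which to run the nuclearity argument: it is tempting to prove directly that $\TT_X\rtimes_r G$ is nuclear by pushing the approximation property for $\alpha$ forward along the nondegenerate equivariant homomorphism $t_A\colon A\to M(\TT_X)$ to the induced action $\tilde\beta$ on $\TT_X$ and then quoting that amenable actions preserve nuclearity. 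That route is viable but forces one to verify permanence of Exel's property under $t_A$; routing the argument through $\TT_{X\rtimes_r G}$ instead, where the nuclearity of the coefficient algebra is already on hand, avoids this verification entirely.
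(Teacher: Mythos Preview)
Your argument is correct and matches the paper's approach: the paper's one-line proof invokes \thmref{toep regular} together with \cite[Theorems~3.9 and~4.4]{exelng}, where Theorem~3.9 gives regularity of $(A,G)$ from the approximation property (hence the chain of isomorphisms via \thmref{toep regular}) and Theorem~4.4 gives nuclearity of $A\rtimes_r G$ from nuclearity of $A$, after which nuclearity of $\TT_{X\rtimes_r G}$ follows from Katsura's result that the Toeplitz algebra inherits nuclearity from its coefficient algebra. Your write-up simply makes explicit the Katsura step that the paper leaves implicit, and your observation that routing the nuclearity argument through $\TT_{X\rtimes_r G}$ avoids having to push the approximation property along $t_A$ is exactly the point.
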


\begin{proof} This follows immediately by combining Theorem \ref{toep regular} and \cite[Theorems 3.9 and 4.4]{exelng}.
\end{proof}

We also include a result concerning exactness. We recall that  $G$ is called {\it exact} (sometimes called {KW-exact}) if for every short exact sequence 
$0 \rightarrow I \rightarrow B \rightarrow B/I \rightarrow 0$
of $G$-$C^*$-algebras, 
the induced sequence 
$$0 \rightarrow I \rtimes _r G \rightarrow B \rtimes _r G \rightarrow (B/I) \rtimes _r G \rightarrow 0$$
is also exact. It is known that $C_r^*(G)$ is exact as a $C^*$-algebra whenever $G$ is exact, and that the converse also holds if $G$ is discrete. (See \cite[Section 5.1]{BO} and references therein).

%
%\begin{cor} Assume that $G$ is KW-exact 
%\jqalert{It seems to me that \cite[Definition~5.1.9]{BO} is a nonstandard definition --- it's my understanding that most others would call the condition Brown and Ozawa use here ``exact''}.
%and $A$ is exact $($cf.\ \cite{BO}$)$. Then $\TT_X \rtimes_r G \simeq \TT_{X \rtimes_r G}$ is exact.
%
%\end{cor}
%\begin{proof}
%It follows from \cite[Theorem 7.2]{anan:exact} that $A\rtimes_r G $ is exact. Hence, by \cite[Theorem 7.1]{KatsuraCorrespondence}, $\TT_{X \rtimes_r G}$ is also exact; and Theorem \ref{toep red} gives that $\TT_X \rtimes_r G \simeq \TT_{X \rtimes_r G}$.
%\end{proof}

\begin{prop} \label{toep exact}
%Assume that $G$ is discrete. Then the following conditions are equivalent:
Consider the following conditions:
 \begin{enumerate}
 \item $G$ is exact and $A$ is exact;
 \item $\TT_{X} \rtimes_r G$ is exact;
 \item $\TT_{X \rtimes_r G}$ is exact.
 \end{enumerate}
 Then we have \textup{(1)} $\Rightarrow$ \textup{(2)} $\Leftrightarrow$ \textup{(3).} 
 If $G$ is discrete, then all three conditions are equivalent.
 \end{prop}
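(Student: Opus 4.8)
The plan is to treat the three implications separately, leaning throughout on the isomorphism $\Phi_r:\TT_X\rtimes_r G\to\TT_{X\rtimes_r G}$ produced in \thmref{toep red}. Since exactness of a $C^*$-algebra is preserved under $*$-isomorphism, the equivalence \textup{(2)}$\Leftrightarrow$\textup{(3)} is immediate from that isomorphism and needs no further argument. Thus the real content is \textup{(1)}$\Rightarrow$\textup{(2)} together with, in the discrete case, the reverse implication \textup{(2)}$\Rightarrow$\textup{(1)}.

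For \textup{(1)}$\Rightarrow$\textup{(2)} I would argue in two steps. First, exactness of the coefficient algebra propagates to the Toeplitz algebra: the Toeplitz algebra $\TT_X$ of a correspondence over an exact $C^*$-algebra is again exact. This is the analogue for exactness of the nuclearity statement used in the previous proposition, and it follows from the structure of $\TT_X$ as an object built, via extensions and inductive limits, from algebras $\KK(E)$ of compact operators on Hilbert $A$-modules $E$, each of which is Morita equivalent to an ideal of $A$ and hence exact, using that exactness is stable under ideals, quotients, extensions, Morita equivalence and inductive limits \cite{BO}. Second, since $G$ is exact and $\TT_X$ is an exact $G$-algebra, the reduced crossed product $\TT_X\rtimes_r G$ is exact by the standard permanence property of reduced crossed products by exact groups \cite{BO}. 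Combining the two steps gives \textup{(2)}. (Equivalently one may reach \textup{(3)} first, using that $A\rtimes_r G$ is exact and that $\TT_{X\rtimes_r G}$ is the Toeplitz algebra of a correspondence over the exact algebra $A\rtimes_r G$; the two routes are interchangeable via $\Phi_r$.)

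For the discrete converse I would extract the two halves of \textup{(1)} from the exactness of $\TT_X\rtimes_r G$. The exactness of $A$ is the easy half: the composite $A\hookrightarrow\TT_X\hookrightarrow\TT_X\rtimes_r G$, the last inclusion being available because $G$ is discrete, exhibits $A$ as a $C^*$-subalgebra of an exact algebra, and exactness passes to subalgebras by Kirchberg's theorem \cite{BO}. The exactness of $G$ is the crux. Here I would use the characterization recalled before the proposition, valid for discrete $G$, that $G$ is exact if and only if $C^*_r(G)$ is exact, and realize $C^*_r(G)$ as a subquotient of $\TT_X\rtimes_r G$. When $A$ is unital, $t_A(1_A)$ is a $G$-invariant unit for $\TT_X$, so $C^*_r(G)$ sits as a genuine $C^*$-subalgebra of $\TT_X\rtimes_r G$ and is therefore exact, whence $G$ is exact.

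The main obstacle is precisely this last step when $A$, equivalently $\TT_X$, is nonunital: then the generators $\lambda_s$ of $C^*_r(G)$ only lie in the multiplier algebra $M(\TT_X\rtimes_r G)$, and exactness does not pass to multiplier algebras. The plan is to circumvent this by the standard device of realizing $C^*_r(G)$ as a quotient of a $C^*$-subalgebra of $\TT_X\rtimes_r G$ rather than as a subalgebra directly --- for instance by cutting down by an element of a $G$-approximate unit of $t_A(A)$, or by a reduction to the unital case --- and then invoking that exactness is inherited by both subalgebras and quotients. I expect this nonunital reduction, rather than any of the permanence properties, to be the only genuinely delicate point; everything else is either invariance under the isomorphism $\Phi_r$ or a citation of standard exactness permanence \cite{BO}.
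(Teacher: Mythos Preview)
Your argument is correct and matches the paper's for \textup{(2)}$\Leftrightarrow$\textup{(3)} (both invoke the isomorphism $\Phi_r$ of \thmref{toep red}) and for \textup{(1)}$\Rightarrow$\textup{(2)} (both use that $A$ exact implies $\TT_X$ exact --- the paper simply cites \cite[Theorem~7.1]{KatsuraCorrespondence} rather than sketching the internal structure of $\TT_X$ as you do --- followed by preservation of exactness under reduced crossed products by exact groups, for which the paper cites \cite[Theorem~7.2]{anan:exact}).

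The discrete converse is where the routes diverge. You work from \textup{(2)}, embedding $A$ into $\TT_X\rtimes_r G$ and then trying to realise $C^*_r(G)$ as a subquotient of $\TT_X\rtimes_r G$, correctly flagging the nonunital case as the delicate point. The paper instead starts from \textup{(3)} and applies \cite[Theorem~7.1]{KatsuraCorrespondence} \emph{in the reverse direction}, to the correspondence $(X\rtimes_r G,\,A\rtimes_r G)$: exactness of $\TT_{X\rtimes_r G}$ forces exactness of its coefficient algebra $A\rtimes_r G$. This strips away the Toeplitz layer entirely from the converse step; one then reads off $A$ exact (subalgebra of $A\rtimes_r G$) and $C^*_r(G)$ exact directly from $A\rtimes_r G$, with the nonunital case handled by a unitization. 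The payoff is that the problem becomes a standard statement about reduced crossed products rather than anything specific to Toeplitz algebras. Your approach would also go through, but the paper's second invocation of Katsura's theorem is the cleaner shortcut you were looking for.
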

 \begin{proof}
 Assume that (1) holds. Then, by \cite[Theorem 7.1]{KatsuraCorrespondence}, $\TT_X$ is exact,  so,
 \cite[Theorem 7.2]{anan:exact} gives that (2) holds. By Theorem \ref{toep red}, $\TT_{X \rtimes_r G} \simeq  \TT_{X} \rtimes_r G$, so (2) is equivalent to (3). Assume now that $G$ is discrete and (3) holds. Then, by \cite[Theorem 7.1]{KatsuraCorrespondence}, $A\rtimes_r G$ is exact, so (1) holds because  exactness passes to $C^*$-subalgebras and to unitizations. 
\end{proof}

\section{Hao-Ng for full crossed products}

In this section we prove a version of the Hao-Ng theorem for full crossed products.

\begin{thm}\label{full}
%Let $G$ act on a nondegenerate correspondence $(X,A)$.
If $J_X\rtimes G=J_{X\rtimes G}$\,,
then
there is a unique isomorphism $\Psi$ making the diagram
\[
\xymatrix@C+50pt{
\TT_X\rtimes G \ar[r]^-\Phi_-\simeq \ar[d]_{T_X\rtimes G}
&\TT_{X\rtimes G} \ar[d]^{T_{X\rtimes G}}
\\
\OO_X\rtimes G \ar@{-->}[r]_-\Psi^-{\simeq}
&\OO_{X\rtimes G}
}
\]
commute,
where $\Phi$ is the isomorphism from \thmref{toep full}.
\end{thm}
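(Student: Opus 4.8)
The plan is to descend the Toeplitz isomorphism $\Phi$ to the Cuntz--Pimsner quotients by matching the two relevant kernels. Both vertical maps are surjective, so it suffices to show that $\Phi$ carries $\ker(T_X\rtimes G)$ onto $\ker T_{X\rtimes G}$; uniqueness of $\Psi$ is then automatic. Write $\II_X:=\ker T_X$, the Cuntz--Pimsner ideal of $\TT_X$, which is the ideal generated by the $G$-invariant set $\{t_A(a)-t_X^{(1)}(\phi_A(a)):a\in J_X\}$, and likewise $\II_{X\rtimes G}:=\ker T_{X\rtimes G}$, generated by $\{t_{A\rtimes G}(b)-t_{X\rtimes G}^{(1)}(\phi_{A\rtimes G}(b)):b\in J_{X\rtimes G}\}$. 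Since $T_X$ is $G$-equivariant and full crossed products are exact for invariant ideals, $\ker(T_X\rtimes G)=\II_X\rtimes G$. The goal is thus the identity $\Phi(\II_X\rtimes G)=\II_{X\rtimes G}$, or equivalently $\Theta(\II_{X\rtimes G})=\II_X\rtimes G$, where $\Theta=(t_X\rtimes G)\times_\TT(t_A\rtimes G)$ is the inverse of $\Phi$ from the proof of \thmref{toep full}.

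The core is a computation on generators using the concrete description of $\Theta$. Since $\Theta$ is the integrated form of $(t_X\rtimes G,t_A\rtimes G)$, we have $\Theta\circ t_{A\rtimes G}=t_A\rtimes G$ and $\Theta\circ t_{X\rtimes G}^{(1)}=(t_X\rtimes G)^{(1)}$. Under the identification $\KK(X\rtimes G)\cong\KK(X)\rtimes G$ used in the proof of \corref{corona}, $(t_X\rtimes G)^{(1)}$ corresponds to $t_X^{(1)}\rtimes G$, and the left action $\phi_{A\rtimes G}$ restricted to $J_X\rtimes G$ agrees with $\phi_A\rtimes G$ (both act by convolution by the $\KK(X)$-valued function $\phi_A\circ f$). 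Invoking the hypothesis $J_X\rtimes G=J_{X\rtimes G}$, every generator of $\II_{X\rtimes G}$ is a limit of generators indexed by $f\in C_c(G,J_X)$, and for such $f$ we compute
\[
\Theta\bigl(t_{A\rtimes G}(f)-t_{X\rtimes G}^{(1)}(\phi_{A\rtimes G}(f))\bigr)
=\bigl(s\mapsto t_A(f(s))-t_X^{(1)}(\phi_A(f(s)))\bigr),
\]
which is an element of $C_c(G,\II_X)\subset\II_X\rtimes G$. Hence $\Theta(\II_{X\rtimes G})\subset\II_X\rtimes G$.

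For the reverse inclusion, note that because $\II_X$ is the ideal generated by the $G$-invariant set $\{t_A(a)-t_X^{(1)}(\phi_A(a)):a\in J_X\}$, a routine density argument shows that $\II_X\rtimes G$ is the ideal of $\TT_X\rtimes G$ generated by the functions $s\mapsto t_A(f(s))-t_X^{(1)}(\phi_A(f(s)))$ with $f\in C_c(G,J_X)$. Since $J_X\rtimes G\subset J_{X\rtimes G}$, each such function is $\Theta$ of a generator of $\II_{X\rtimes G}$, so these functions lie in $\Theta(\II_{X\rtimes G})$; as $\Theta$ is an isomorphism this yields $\II_X\rtimes G\subset\Theta(\II_{X\rtimes G})$. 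Combining the two inclusions gives $\Theta(\II_{X\rtimes G})=\II_X\rtimes G$, so $\Phi$ descends to an isomorphism $\Psi:\OO_X\rtimes G\to\OO_{X\rtimes G}$ with $\Psi\circ(T_X\rtimes G)=T_{X\rtimes G}\circ\Phi$, and surjectivity of $T_X\rtimes G$ forces $\Psi$ to be unique.

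The main obstacle is the identification of the Cuntz--Pimsner ideal $\II_{X\rtimes G}$ of the crossed-product correspondence with the crossed product $\II_X\rtimes G$ of the Cuntz--Pimsner ideal. This is precisely where the hypothesis enters: the forward inclusion $\Theta(\II_{X\rtimes G})\subset\II_X\rtimes G$ needs $J_{X\rtimes G}\subset J_X\rtimes G$ (to realize every generator of $\II_{X\rtimes G}$ as a limit of $C_c(G,J_X)$-indexed generators), while the reverse inclusion needs $J_X\rtimes G\subset J_{X\rtimes G}$ (so that each $C_c(G,J_X)$-indexed element really is the $\Theta$-image of a generator of $\II_{X\rtimes G}$). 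The supporting compatibilities --- that $(t_X\rtimes G)^{(1)}$ corresponds to $t_X^{(1)}\rtimes G$ and $\phi_{A\rtimes G}$ to $\phi_A\rtimes G$ on $J_X\rtimes G$ under $\KK(X\rtimes G)\cong\KK(X)\rtimes G$ --- are the technical points to pin down carefully.
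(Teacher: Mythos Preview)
Your argument is correct, but it takes a different route from the paper's. The paper splits the hypothesis into its two inclusions and proves two separate lemmas: from $J_X\rtimes G\subset J_{X\rtimes G}$ it checks directly that the Toeplitz representation $(k_{X\rtimes G}\circ i_X,\,k_{A\rtimes G}\circ i_A)$ of $(X,A)$ is Cuntz--Pimsner covariant (via the criterion $i_A(J_X)\subset M(A\rtimes G;J_{X\rtimes G})$), yielding $\Psi$ as the integrated form of an explicit covariant pair; from $J_{X\rtimes G}\subset J_X\rtimes G$ it checks that $(k_X\rtimes G,\,k_A\rtimes G)$ is Cuntz--Pimsner covariant, yielding $\Upsilon=(k_X\rtimes G)\times(k_A\rtimes G)$; the two maps are then inverse to each other because both squares commute over the surjections $T_X\rtimes G$ and $T_{X\rtimes G}$. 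You instead stay at the Toeplitz level and match kernels, using exactness of full crossed products to identify $\ker(T_X\rtimes G)=\II_X\rtimes G$ and then showing $\Theta(\II_{X\rtimes G})=\II_X\rtimes G$ by a generator computation. The content is the same --- each kernel inclusion corresponds exactly to one of the paper's covariance checks, and your computation $\Theta\bigl(t_{A\rtimes G}(f)-t_{X\rtimes G}^{(1)}(\phi_{A\rtimes G}(f))\bigr)=\bigl(s\mapsto t_A(f(s))-t_X^{(1)}(\phi_A(f(s)))\bigr)$ is essentially the crossed-product version of $k_X^{(1)}\circ\phi_A=k_A$ on $J_X$ that the paper uses. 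The paper's approach buys explicit formulas for $\Psi$ and $\Upsilon$ as integrated forms and isolates the two halves as results of independent interest (each needing only one inclusion); your approach is more economical once $\Phi$ is in hand and makes the role of the hypothesis transparent as a kernel-matching condition. The one place in your write-up that deserves a line more of care is the ``routine density argument'' that $\II_X\rtimes G$ is generated as an ideal by the functions $F_f$ with $f\in C_c(G,J_X)$; it is indeed routine (take $f=a\otimes g$ with $a\in J_X$, $g\in C_c(G)$, and use that $\II_X$ is generated by $\{t_A(a)-t_X^{(1)}(\phi_A(a))\}$ together with density of $\II_X\odot C_c(G)$ in $C_c(G,\II_X)$), but spelling out why multiplication by $i_{\TT_X}(\TT_X)$ and $i_G(G)$ suffices to reach all of $C_c(G,\II_X)$ would make the argument airtight.
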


We feel that it is instructive to split the result into two halves:

\begin{lem}\label{contain 1}
%Let $(\gamma,\alpha)$ be an action of $G$ on a nondegenerate correspondence $(X,A)$.
If $J_X\rtimes G\subset J_{X\rtimes G}$\, , then
there is a unique homomorphism $\Psi$ making the diagram
\begin{equation}\label{Psi}
\xymatrix@C+50pt{
\TT_X\rtimes G \ar[r]^-\Phi \ar[d]_{T_X\rtimes G}
&\TT_{X\rtimes G} \ar[d]^{T_{X\rtimes G}}
\\
\OO_X\rtimes G \ar@{-->}[r]_-\Psi^-{!}
&\OO_{X\rtimes G}
}
\end{equation}
commute.
Moreover, $\Psi$ is surjective.
\end{lem}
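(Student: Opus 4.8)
The plan is to obtain $\Psi$ as the homomorphism induced on the quotient $\OO_X\rtimes G$ by $T_{X\rtimes G}\circ\Phi$. Since $T_X:\TT_X\to\OO_X$ is a quotient map, its full crossed product $T_X\rtimes G:\TT_X\rtimes G\to\OO_X\rtimes G$ is surjective; hence any $\Psi$ making \eqref{Psi} commute is automatically unique, and is automatically surjective because $T_{X\rtimes G}\circ\Phi$ is surjective (being the composite of the quotient map $T_{X\rtimes G}$ with the isomorphism $\Phi$ of \thmref{toep full}). So the entire problem reduces to \emph{existence}, i.e.\ to the kernel containment $\ker(T_X\rtimes G)\subset\ker(T_{X\rtimes G}\circ\Phi)$.

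First I would reduce this to a statement about $\TT_X$ alone. Exactness of the full-crossed-product functor gives $\ker(T_X\rtimes G)=(\ker T_X)\rtimes G$, and this ideal of $\TT_X\rtimes G$ is generated by $i_{\TT_X}(\ker T_X)$; since $\ker(T_{X\rtimes G}\circ\Phi)$ is an ideal, it therefore suffices to check that $T_{X\rtimes G}\circ\Phi\circ i_{\TT_X}:\TT_X\to M(\OO_{X\rtimes G})$ annihilates $\ker T_X$, i.e.\ factors through $T_X$. Using the identity $\Phi\circ i_{\TT_X}=(t_{X\rtimes G}\circ i_X)\times_\TT(t_{A\rtimes G}\circ i_A)$ recorded in the proof of \thmref{toep full}, and pushing $T_{X\rtimes G}$ through the integrated form, this composite is exactly $(k_{X\rtimes G}\circ i_X)\times_\TT(k_{A\rtimes G}\circ i_A)$, the integrated form of the Toeplitz representation
\[
(\psi,\pi):=(k_{X\rtimes G}\circ i_X,\;k_{A\rtimes G}\circ i_A):(X,A)\to M(\OO_{X\rtimes G}).
\]
Thus existence of $\Psi$ is equivalent to this representation being Cuntz--Pimsner covariant on $J_X$, that is, to $\pi(a)=\psi^{(1)}(\phi(a))$ for all $a\in J_X$ (recall $\phi(a)\in\KK(X)$ there).

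The crux --- and the one place where the hypothesis $J_X\rtimes G\subset J_{X\rtimes G}$ enters --- is verifying this covariance relation, and I expect it to be the main obstacle: both sides a priori live only in $M(\OO_{X\rtimes G})$, while the genuine Cuntz--Pimsner relation for $(k_{X\rtimes G},k_{A\rtimes G})$ is available only on the honest ideal $J_{X\rtimes G}\subset A\rtimes G$. My plan is to test the identity $\pi(a)=\psi^{(1)}(\phi(a))$ against $k_{A\rtimes G}(c)$ for $c$ in the dense subset $C_c(G,A)$ and then invoke nondegeneracy of $k_{A\rtimes G}$. The key observation is that for $a\in J_X$ the product $i_A(a)\,c$ lies in $C_c(G,J_X)$, hence in $J_X\rtimes G\subset J_{X\rtimes G}$ \emph{by hypothesis}, so genuine Cuntz--Pimsner covariance applies to it and gives $\pi(a)k_{A\rtimes G}(c)=k_{A\rtimes G}(i_A(a)c)=k_{X\rtimes G}^{(1)}\bigl(\phi_{A\rtimes G}(i_A(a)c)\bigr)$, where I write $\phi_{A\rtimes G}$ for the left action of $A\rtimes G$ on $X\rtimes G$. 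I would then identify $\phi_{A\rtimes G}(i_A(a))\in M(\KK(X\rtimes G))$ with the image of $\phi(a)\in\KK(X)$ under the canonical map $\KK(X)\to M(\KK(X\rtimes G))$ induced by $i_X$, so that $\psi^{(1)}(\phi(a))=\overline{k_{X\rtimes G}^{(1)}}\bigl(\phi_{A\rtimes G}(i_A(a))\bigr)$, and reduce the remaining discrepancy $\overline{k_{X\rtimes G}^{(1)}}(\phi_{A\rtimes G}(c))-k_{A\rtimes G}(c)$ to the elementary Toeplitz identities
\[
k_{X\rtimes G}(i_X(y))^*\,\overline{k_{X\rtimes G}^{(1)}}(\phi_{A\rtimes G}(c))
=k_{X\rtimes G}(c^*\cdot i_X(y))^*
=k_{X\rtimes G}(i_X(y))^*\,k_{A\rtimes G}(c),
\]
valid since $i_X(X)\subset M_{A\rtimes G}(X\rtimes G)$. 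Approximating $\phi(a)$ in $\KK(X)$ by finite-rank operators $\sum_j\theta_{x_j,y_j}$, these identities show that each factor $k_{X\rtimes G}(i_X(y_j))^*$ kills the discrepancy, so the left factor $\psi^{(1)}(\phi(a))$ absorbs it; hence $\pi(a)k_{A\rtimes G}(c)=\psi^{(1)}(\phi(a))k_{A\rtimes G}(c)$ for all $c$, and nondegeneracy of $k_{A\rtimes G}$ yields $\pi(a)=\psi^{(1)}(\phi(a))$.

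With covariance on $J_X$ established, the factorization produces the desired homomorphism $\Psi$ making \eqref{Psi} commute, and uniqueness and surjectivity follow as noted in the first paragraph. Throughout I would rely on \lemref{extend} to make sense of the nondegenerate multiplier extensions of $k_{X\rtimes G}$, $k_{A\rtimes G}$, and $\overline{k_{X\rtimes G}^{(1)}}$ appearing above, and on the identification $\KK(X\rtimes G)\cong\KK(X)\rtimes G$ already used in \corref{corona}.
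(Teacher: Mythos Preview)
Your approach and the paper's are essentially the same: both reduce existence of $\Psi$ to showing that the Toeplitz representation
\[
(k_{X\rtimes G}\circ i_X,\;k_{A\rtimes G}\circ i_A):(X,A)\to M(\OO_{X\rtimes G})
\]
is Cuntz--Pimsner covariant, and both hinge on the observation that $i_A(a)c\in C_c(G,J_X)\subset J_X\rtimes G\subset J_{X\rtimes G}$ for $a\in J_X$ and $c\in C_c(G,A)$. The differences are organizational. First, the paper obtains $\Psi$ directly as the integrated form $\bigl((k_{X\rtimes G}\circ i_X)\times(k_{A\rtimes G}\circ i_A)\bigr)\times(k_{A\rtimes G}\circ i_G)$ of a covariant pair, whereas you obtain it by factoring $T_{X\rtimes G}\circ\Phi$ through the surjection $T_X\rtimes G$; these are equivalent. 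Second, and more substantively, the paper does not verify covariance by hand: it invokes \cite[Lemma~3.2]{kqrfunctor}, which reduces covariance of such a composite to the two conditions $i_X(X)\subset M_{A\rtimes G}(X\rtimes G)$ and $i_A(J_X)\subset M(A\rtimes G;J_{X\rtimes G})$, and then checks these directly. This black-box route sidesteps all the multiplier-extension issues you encounter.

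One caution about your direct verification: your appeal to $\overline{k_{X\rtimes G}^{(1)}}$ on $\phi_{A\rtimes G}(c)$ is not covered by \lemref{extend}, which extends the pair $(\psi,\pi)$, not the associated map $\psi^{(1)}$; and $k_{X\rtimes G}^{(1)}:\KK(X\rtimes G)\to\OO_{X\rtimes G}$ is in general \emph{not} nondegenerate, so it has no canonical strict extension to $\LL(X\rtimes G)$. Your absorption argument can nevertheless be made rigorous without that extension: approximating $\phi(a)$ by $T_n=\sum_j\theta_{x_j^n,y_j^n}$, one checks that $\psi^{(1)}(T_n)k_{A\rtimes G}(c)=k_{X\rtimes G}^{(1)}(S_n)$ with $S_n=i_X^{(1)}(T_n)\,\phi_{A\rtimes G}(c)\in\KK(X\rtimes G)$, and $S_n\to\phi_{A\rtimes G}(i_A(a)c)$ in norm because $i_A(a)c\in J_{X\rtimes G}$ puts the limit in $\KK(X\rtimes G)$. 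So the argument is salvageable, but as written the step involving $\overline{k_{X\rtimes G}^{(1)}}$ is a gap; citing \cite[Lemma~3.2]{kqrfunctor} (as the paper does) avoids it entirely.
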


\begin{proof}
Of course, once we know $\Psi$ exists,
it is unique since $T_X\rtimes G$ is surjective, and
is surjective since $\Phi$ and $T_{X\rtimes G}$ are.
For the existence, 
we first claim that the Toeplitz representation
\[
(k_{X\rtimes G}\circ i_X,k_{A\rtimes G}\circ i_A):(X,A)\to M(\OO_{X\rtimes G})
\]
is Cuntz-Pimsner covariant.
Since the correspondence homomorphism $(i_X,i_A)$ is nondegenerate,
by \cite[Lemma~3.2]{kqrfunctor} it suffices to show:
\begin{enumerate}
\renewcommand{\labelenumi}{(\roman{enumi})}
\item $i_X(X)\subset M_{A\rtimes G}(X\rtimes G)$;
\item $i_A(J_X)\subset M(A\rtimes G;J_{X\rtimes G})$
\end{enumerate}
where 
\[
 M(A\rtimes G;J_{X\rtimes G})= \{m\in M(A\rtimes G) : m(A\rtimes G) \cup (A\rtimes G)m \subset J_{X\rtimes G} \}.
\]
For (i), 
it is enough to observe that if $x\in X$ and $f\in C_c(G,A)\subset A\rtimes G$ then $f\cdot i_X(x)$ is the element of $C_c(G,x)$ given by
\[
(f\cdot i_X(x))(s)=f(s)\cdot \gamma_s(x),
\]
and to see this we compute that,
%because, 
for $g\in C_c(G,A)$,
\begin{align*}
\bigl((f\cdot i_X(x))\cdot g\bigr)(s)
&=\bigl(f\cdot (i_X(x)\cdot g)\bigr)(s)
\\&=\int f(t)\cdot \gamma_t\bigl((i_X(x)\cdot g)(t\inv s)\bigr)\,dt
\\&=\int f(t)\cdot \gamma_t\bigl(x\cdot g(t\inv s)\bigr)\,dt
\\&=\int f(t)\cdot \gamma_t(x)\cdot \alpha_t(g(t\inv s))\,dt.
\end{align*}

For (ii), we need to know that $i_A(J_X)$ multiplies $A\rtimes G$ into the Katsura ideal $J_{X\rtimes G}$.
For $a\in J_X$ and $f\in C_c(G,A)$ we have
\[
\bigl(i_A(a)f\bigr)(s)=af(s)\in J_X,
\]
so
\[
i_A(a)f\in C_c(G,J_X)\subset J_X\rtimes G\subset J_{X\rtimes G},
\]
and it follows that $i_A(a)$ multiplies $A\rtimes G$ into $J_{X\rtimes G}$, as desired.

Thus
we have
a homomorphism
\[
(k_{X\rtimes G}\circ i_X)\times (k_{A\rtimes G}\circ i_A):\OO_X\to M(\OO_{X\rtimes G}).
\]
Then
the argument in
\cite[proof of Proposition~4.3]{kqrfunctor}
gives a surjective homomorphism
\[
\Psi:=\bigl((k_{X\rtimes G}\circ i_X)\times (k_{A\rtimes G}\circ i_A)\bigr)\times (k_{A\rtimes G}\circ i_G):\OO_X\rtimes G\to \OO_{X\rtimes G}.
\]

Finally, recalling from \thmref{toep full} that
\[
\Phi=\bigl((t_{X\rtimes G}\circ i_X)\times_\TT (t_{A\rtimes G}\circ i_A)\bigr)\times (
t_{A\rtimes G}\circ i_G),
\]
it is routine to check that the diagram~\eqref{Psi} commutes.
\end{proof}

\begin{lem}\label{contain 2}
%Let $G$ act on a nondegenerate correspondence $(X,A)$.
If $J_X\rtimes G\supset J_{X\rtimes G}$, then
there is a unique homomorphism $\Upsilon$ making the diagram
\begin{equation}\label{surjection 2}
\xymatrix@C+50pt{
\TT_X\rtimes G\ar[d]_{T_X\rtimes G}
&\TT_{X\rtimes G} \ar[d]^{T_{X\rtimes G}} \ar[l]_-{\Phi\inv}
\\
\OO_X\rtimes G
&\OO_{X\rtimes G} \ar@{-->}[l]^-\Upsilon_-{!}
}
\end{equation}
commute.
Moreover, $\Upsilon$ is surjective.
\end{lem}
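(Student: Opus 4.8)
The plan is to build $\Upsilon$ by factoring the composite $(T_X\rtimes G)\circ\Phi\inv:\TT_{X\rtimes G}\to\OO_X\rtimes G$ through the quotient map $T_{X\rtimes G}$; the factoring map is then the desired $\Upsilon$, and the direction of the inclusion $J_{X\rtimes G}\subset J_X\rtimes G$ is exactly what makes the factoring possible. First I would identify this composite explicitly. Recalling from \thmref{toep full} that $\Phi\inv=\Theta=(t_X\rtimes G)\times_\TT(t_A\rtimes G)$ is the integrated form of the Toeplitz representation $(t_X\rtimes G,t_A\rtimes G)$ of the crossed-product correspondence $(X\rtimes G,A\rtimes G)$, and using $T_X\circ t_X=k_X$, $T_X\circ t_A=k_A$ together with functoriality of the full crossed product, the composite $(T_X\rtimes G)\circ\Theta$ is the integrated form of the Toeplitz representation $(k_X\rtimes G,k_A\rtimes G):(X\rtimes G,A\rtimes G)\to M(\OO_X\rtimes G)$.

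The crux is then to show that $(k_X\rtimes G,k_A\rtimes G)$ is Cuntz-Pimsner covariant, for once this is known its integrated form factors uniquely through $T_{X\rtimes G}$ by the universal property of $\OO_{X\rtimes G}$, yielding $\Upsilon:=(k_X\rtimes G)\times(k_A\rtimes G)$ and the commuting square. Covariance of $(k_X\rtimes G,k_A\rtimes G)$ requires the identity
\[
(k_A\rtimes G)(b)=(k_X\rtimes G)^{(1)}\bigl(\phi_{A\rtimes G}(b)\bigr)
\]
to hold for all $b\in J_{X\rtimes G}$. Since by hypothesis $J_{X\rtimes G}\subset J_X\rtimes G$, it suffices to establish this identity on the (possibly larger) ideal $J_X\rtimes G$; as both sides will be norm-continuous in $b$, it is enough to verify it on the dense subset $C_c(G,J_X)$ and extend by continuity.

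For $f\in C_c(G,J_X)$ I would invoke the canonical isomorphism $\KK(X\rtimes G)\cong\KK(X)\rtimes G$ (the full-crossed-product analogue of the identification $\KK(X\rtimes_r G)\cong\KK(X)\rtimes_r G$ used in the proof of \corref{corona}). Under it, $\phi_{A\rtimes G}(f)$ lies in $\KK(X\rtimes G)$ and corresponds to $\phi_A\circ f\in C_c(G,\KK(X))$ (using $\phi_A(J_X)\subset\KK(X)$ and the equivariance of $\phi_A$), while $(k_X\rtimes G)^{(1)}$ corresponds to $k_X^{(1)}\rtimes G$. Hence the right-hand side is the element of $C_c(G,\OO_X)$ given pointwise by $s\mapsto k_X^{(1)}(\phi_A(f(s)))$, and pointwise Cuntz-Pimsner covariance of $(k_X,k_A)$ — namely $k_A(f(s))=k_X^{(1)}(\phi_A(f(s)))$, valid since each $f(s)\in J_X$ — identifies this with $s\mapsto k_A(f(s))=(k_A\rtimes G)(f)$. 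This gives the identity on $C_c(G,J_X)$, hence on $J_X\rtimes G\supset J_{X\rtimes G}$, establishing covariance.

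With covariance in hand, $\Upsilon$ exists and satisfies $\Upsilon\circ T_{X\rtimes G}=(T_X\rtimes G)\circ\Phi\inv$; uniqueness is immediate since $T_{X\rtimes G}$ is surjective. For surjectivity, note that $T_X\rtimes G$ is surjective (being the crossed product of the surjection $T_X$) and $\Phi\inv$ is an isomorphism, so $(T_X\rtimes G)\circ\Phi\inv=\Upsilon\circ T_{X\rtimes G}$ is surjective, whence $\Upsilon$ is surjective; the same identity confirms that the range of $\Upsilon$ lands in $\OO_X\rtimes G$ rather than merely in $M(\OO_X\rtimes G)$. I expect the main obstacle to be the covariance verification of the third paragraph, and within it the bookkeeping needed to show that $\phi_{A\rtimes G}$ carries $J_X\rtimes G$ into $\KK(X\rtimes G)$ and is intertwined with $\phi_A\rtimes G$ under the identification $\KK(X\rtimes G)\cong\KK(X)\rtimes G$; this is precisely the full-crossed-product counterpart of the diagram used in the proof of \corref{corona}, and care with the convolution formulas for the left action and for $(k_X\rtimes G)^{(1)}$ will be required.
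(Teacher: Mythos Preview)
Your proposal is correct and follows essentially the same route as the paper: both arguments show that the Toeplitz representation $(k_X\rtimes G,k_A\rtimes G)$ of $(X\rtimes G,A\rtimes G)$ is Cuntz-Pimsner covariant by using the hypothesis $J_{X\rtimes G}\subset J_X\rtimes G$ together with the identification $\KK(X\rtimes G)\cong\KK(X)\rtimes G$, and then set $\Upsilon=(k_X\rtimes G)\times(k_A\rtimes G)$. The only cosmetic difference is that you verify the covariance identity pointwise on $C_c(G,J_X)$ and extend by density, whereas the paper packages the same computation as ``the equality $k_X^{(1)}\circ\phi_A=k_A$ on $J_X$ is $G$-equivariant, so taking crossed products gives $(k_X^{(1)}\circ\phi_A)\rtimes G=k_A\rtimes G$ on $J_X\rtimes G$''.
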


\begin{proof}
Again, once we know $\Upsilon$ exists,
it is unique since $T_{X\rtimes G}$ is surjective, and
is surjective since $\Phi\inv$ and $T_X\rtimes G$ are.
For the existence, first note that the commutative diagram
\[
\xymatrix@C+20pt{
\TT_X \ar[d]_{T_X}
&(X,A) \ar[l]_-{(t_X,t_A)} \ar[dl]^{(k_X,k_A)}
\\
\OO_X
}
\]
is $G$-equivariant, and taking crossed products gives the commutative diagram
\begin{equation}\label{triangle}
\xymatrix@C+40pt{
\TT_X\rtimes G \ar[d]_{T_X\rtimes G}
&(X\rtimes G,A\rtimes G) \ar[l]_-{(t_X\rtimes G,t_A\rtimes G)} \ar[dl]^{(k_X\rtimes G,k_A\rtimes G)}
\\
\OO_X\rtimes G.
}
\end{equation}
We claim that the Toeplitz representation $(k_X\rtimes G,k_A\rtimes G)$
is Cuntz-Pimsner covariant.
We must show that
\[
(k_X\rtimes G)^{(1)}\circ \phi_{A\rtimes G}=k_A\rtimes G
\]
on the Katsura ideal $J_{X\rtimes G}$.
Again employing the isomorphism $\KK(X\rtimes G)\cong \KK(X)\rtimes G$,
since the diagram
\[
\xymatrix@C+20pt{
J_X\rtimes G \ar[r]^-{\phi_{A\rtimes G}} \ar[dr]_{\phi_A\rtimes G}
&\KK(X\rtimes G) \ar[r]^-{(k_X\rtimes G)^{(1)}} \ar[d]_\simeq
&\OO_X\rtimes G
\\
&\KK(X)\rtimes G \ar[ur]_{k_X^{(1)}\rtimes G}
}
\]
commutes,
it suffices to show that
\begin{equation}\label{equal}
(k_X^{(1)}\rtimes G)\circ (\phi_A\rtimes G)=k_A\rtimes G
\end{equation}
on $J_{X\rtimes G}$.
Now, since the equality $k_X^{(1)}\circ \phi_A=k_A$ on $J_X$ is $G$-equivariant,
taking crossed products gives
\begin{equation}\label{equal2}
(k_X^{(1)}\circ \phi_A)\rtimes G=k_A\rtimes G\midtext{on}J_X\rtimes G.
\end{equation}
We have $J_{X\rtimes G}\subset J_X\rtimes G$ by hypothesis,
and
\[
(k_X^{(1)}\rtimes G)\circ (\phi_A\rtimes G)=(k_X^{(1)}\circ \phi_A)\rtimes G,
\]
so \eqref{equal} follows from \eqref{equal2}.

Thus we have a homomorphism
\[
\Upsilon:=(k_X\rtimes G)\times (k_A\rtimes G):\OO_{X\rtimes G}\to \OO_X\rtimes G,
\]
which makes the diagram \eqref{surjection 2} commute
because \eqref{triangle} commutes.
\end{proof}

\begin{proof}[Proof of \thmref{full}]
This follows immediately from Lemmas~\ref{contain 1} and \ref{contain 2},
since the rectangles commute and the vertical maps are surjections.
\end{proof}

\section{Hao-Ng for reduced crossed products}

In this section 
%we prove a version of 
we discuss versions of the Hao-Ng theorem \cite[Theorem~2.10]{HN} for reduced crossed products. 

We first note that the commutative diagram
\[
\xymatrix@C+20pt{
\TT_X \ar[d]_{T_X}
&(X,A) \ar[l]_-{(t_X,t_A)} \ar[dl]^{(k_X,k_A)}
\\
\OO_X
}
\]
is $G$-equivariant, and taking reduced crossed products gives the commutative diagram
\begin{equation}\label{triangle red}
\xymatrix@C+40pt{
\TT_X\rtimes_r G \ar[d]_{T_X\rtimes_r G}
&(X\rtimes_r G,A\rtimes_r G) \ar[l]_-{(t_X\rtimes_r G,t_A\rtimes_r G)} \ar[dl]^{(k_X\rtimes_r G,k_A\rtimes_r G)}
\\
\OO_X\rtimes_r G
}
\end{equation}
In a recent paper \cite{kim:coaction} D.-W.\ Kim deduces the following result from a more general result dealing with coactions of Hopf $C^*$-algebras on $C^*$-correspondences:  

\begin{thm}
[{\cite[Corollary 5.11]{kim:coaction}}]
\label{kim}
If  $(k_X\rtimes_r G,\,k_A\rtimes_r G)$ is Cuntz-Pimsner covariant, then $\Upsilon_r:=(k_X\rtimes_r G) \times (k_A\rtimes_r G)$ is an isomorphism from $\OO_{X \rtimes_r G}$ onto  $\OO_X \rtimes_r G$.
\end{thm}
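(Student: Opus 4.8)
The plan is to show that $\Upsilon_r$ is both injective and surjective, observing first that $\Upsilon_r$ is well-defined on $\OO_{X\rtimes_r G}$ precisely because of the standing hypothesis that $(k_X\rtimes_r G, k_A\rtimes_r G)$ is Cuntz--Pimsner covariant, and that as an integrated form it satisfies $\Upsilon_r\circ k_{X\rtimes_r G}=k_X\rtimes_r G$ and $\Upsilon_r\circ k_{A\rtimes_r G}=k_A\rtimes_r G$.

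For injectivity I would exhibit a left inverse using the characterization of $\OO_X$ recalled in \secref{prelim} (namely \cite[Proposition~7.14]{KatsuraIdeal}), applied to the correspondence $(X\rtimes_r G, A\rtimes_r G)$. To invoke it I must check that the Toeplitz representation $(k_X\rtimes_r G, k_A\rtimes_r G)$ is injective and carries a gauge action. Injectivity reduces to injectivity of the coefficient map $k_A\rtimes_r G$: since $k_A\colon A\to\OO_X$ is injective (a basic feature of Katsura's construction), and since the reduced-crossed-product functor preserves injectivity --- the first assertion of \lemref{quotient} --- the map $k_A\rtimes_r G$ is injective. For the gauge action I would start from the gauge action $\sigma$ on $\OO_X$, observe that it commutes with the $G$-action $\beta$ (immediate on the generators $k_X(\xi)$ and $k_A(a)$), and hence descends to an action $\sigma\rtimes_r G$ of $\T$ on $\OO_X\rtimes_r G$; a one-line computation on $C_c(G,X)$ and $C_c(G,A)$, formally identical to the one for $\sigma'=\sigma\rtimes_r G$ in the proof of \thmref{toep red}, shows $(\sigma_z\rtimes_r G)\circ(k_X\rtimes_r G)=z\,(k_X\rtimes_r G)$ and $(\sigma_z\rtimes_r G)\circ(k_A\rtimes_r G)=k_A\rtimes_r G$. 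The characterization then yields a surjective homomorphism $\rho\colon C^*(k_X\rtimes_r G, k_A\rtimes_r G)\to\OO_{X\rtimes_r G}$ with $\rho\circ(k_X\rtimes_r G)=k_{X\rtimes_r G}$ and $\rho\circ(k_A\rtimes_r G)=k_{A\rtimes_r G}$. Comparing with the intertwining relations for $\Upsilon_r$ gives $\rho\circ\Upsilon_r\circ k_{X\rtimes_r G}=k_{X\rtimes_r G}$ and likewise for $A$, so $\rho\circ\Upsilon_r=\id_{\OO_{X\rtimes_r G}}$; in particular $\Upsilon_r$ is injective.

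For surjectivity I would use the isomorphism $\Theta_r:=(t_X\rtimes_r G)\times_\TT(t_A\rtimes_r G)\colon\TT_{X\rtimes_r G}\to\TT_X\rtimes_r G$ from \thmref{toep red}. Checking on the generators $t_{X\rtimes_r G}(\zeta)$ and $t_{A\rtimes_r G}(b)$ and using $T_{X\rtimes_r G}\circ t_{X\rtimes_r G}=k_{X\rtimes_r G}$, $\Theta_r\circ t_{X\rtimes_r G}=t_X\rtimes_r G$, and $(T_X\rtimes_r G)\circ(t_X\rtimes_r G)=k_X\rtimes_r G$ (with the $A$-analogues), one obtains
\[
\Upsilon_r\circ T_{X\rtimes_r G}=(T_X\rtimes_r G)\circ\Theta_r .
\]
Since $\Theta_r$ is an isomorphism and $T_X\rtimes_r G$ is the reduced crossed product of the surjection $T_X$, hence surjective, the right-hand side is surjective, and therefore so is $\Upsilon_r$. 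This is the reduced analogue of the surjectivity argument in \lemref{contain 2}. Combining the two halves, $\Upsilon_r$ is an isomorphism onto $\OO_X\rtimes_r G$, with inverse $\rho$.

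I expect the only genuinely substantive point to be the injectivity of $k_A\rtimes_r G$, and this is exactly where the reduced setting is essential: preservation of injectivity under crossed products can fail for full crossed products, which is why the analogous statement for $\OO_X\rtimes G$ is not available by this argument. Everything else --- the commuting square for surjectivity and the gauge-action computation --- is routine and parallels computations already carried out in \thmref{toep red} and \lemref{contain 2}.
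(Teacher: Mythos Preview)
Your proposal is correct and follows essentially the same route as the paper's alternative proof via \thmref{reduced}: the paper's $\Psi_r$ is precisely your $\rho$, constructed by the same injectivity-plus-gauge-action argument and \cite[Proposition~7.14]{KatsuraIdeal}, and the paper then checks on generators that $\Psi_r$ and $\Upsilon_r$ are mutual inverses. Your surjectivity argument via the commuting square $\Upsilon_r\circ T_{X\rtimes_r G}=(T_X\rtimes_r G)\circ\Theta_r$ is exactly the commutativity of diagram~\eqref{surjection 2 red}, which the paper also records; the only cosmetic difference is that the paper cites \cite[Lemma~3.1]{lan:dual} rather than \lemref{quotient} for the injectivity of $k_A\rtimes_r G$.
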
  
Kim also provides some equivalent conditions for $(k_X\rtimes_r G,\,k_A\rtimes_r G)$  being Cuntz-Pimsner covariant, which are satisfied for instance when $J_X =A$ or when $\phi_A$ is injective; see \cite[Theorem 5.7 and Corollary 5.8]{kim:coaction}.

\smallskip Theorem \ref{kim} may be shown by using the Gauge-Invariant Uniqueness Theorem for Cuntz-Pimsner algebras, essentially as in the proof of \cite[Theorem~2.10]{HN}. We provide an alternative approach:
%We give below an alternative proof of Theorem \ref{kim}  below (see part ii) of Theorem \ref{reduced}).

\begin{thm}\label{reduced}
\begin{enumerate}
\item
There is a unique homomorphism $\Psi_r$ making the diagram
\begin{equation}\label{Psi r}
\xymatrix@C+50pt{
\TT_X\rtimes_r G \ar[r]^-{\Phi_r} \ar[d]_{T_X\rtimes_r G}
&\TT_{X\rtimes_r G} \ar[d]^{T_{X\rtimes_r G}}
\\
\OO_X\rtimes_r G \ar@{-->}[r]_-{\Psi_r}^-{!}
&\OO_{X\rtimes_r G}
}
\end{equation}
commute, where $\Phi_r$ is the isomorphism from \thmref{toep red}.
The map $\Psi_r$ is surjective and satisfies $$\Psi_r \circ (k_X \rtimes_r G) = k_{X \rtimes_r G}\, , \,  \Psi_r \circ (k_A\rtimes_r G) = k_{A \rtimes_r G}\,.$$

\item
Assume that $(k_X \rtimes_r  G,\,k_A\rtimes_r  G)$ is Cuntz-Pimsner covariant. Then $\Psi_r$ is an isomorphism, and its inverse is $\Upsilon_r = (k_X\rtimes_r G) \times (k_A\rtimes_r G)$. The isomorphism $\Upsilon_r$ is the unique homomorphism  making the diagram
\begin{equation}\label{surjection 2 red}
\xymatrix@C+50pt{
\TT_X\rtimes_r G\ar[d]_{T_X\rtimes_r G}
&\TT_{X\rtimes_r G} \ar[d]^{T_{X\rtimes_r G}} \ar[l]_-{(\Phi_r)\inv}^-\simeq
\\
\OO_X\rtimes_r G
&\OO_{X\rtimes_r G} \ar@{-->}[l]^-{\Upsilon_r}_-{!}
}
\end{equation}
commute.
\end{enumerate}
\end{thm}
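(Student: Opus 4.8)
The plan is to establish part~(1) as the reduced counterpart of \lemref{contain 1}, and then to derive part~(2) formally once the covariance hypothesis is in force. For part~(1) I would first show that the Toeplitz representation $(k_{X\rtimes_r G}\circ i_X^r,\,k_{A\rtimes_r G}\circ i_A^r)\colon(X,A)\to M(\OO_{X\rtimes_r G})$ is Cuntz--Pimsner covariant. Since $(i_X^r,i_A^r)$ is a nondegenerate correspondence homomorphism, \cite[Lemma~3.2]{kqrfunctor} reduces this to the two conditions $i_X^r(X)\subset M_{A\rtimes_r G}(X\rtimes_r G)$ and $i_A^r(J_X)\subset M(A\rtimes_r G;J_{X\rtimes_r G})$. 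The first is the same computation as in \lemref{contain 1}; for the second, the identity $(i_A^r(a)f)(s)=af(s)$ for $a\in J_X$ and $f\in C_c(G,A)$ shows $i_A^r(a)\,(A\rtimes_r G)\subset J_X\rtimes_r G$, so everything comes down to the single inclusion $J_X\rtimes_r G\subset J_{X\rtimes_r G}$.

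This inclusion is the crux, and it is exactly where passing to reduced crossed products is essential. The Katsura ideal $J_X$ is $G$-invariant, and by definition $\phi_A$ restricts to an \emph{injective} homomorphism $J_X\to\KK(X)$. The decisive point is that the reduced crossed product functor preserves injectivity of equivariant homomorphisms --- the property underlying \lemref{quotient} --- so the induced map $J_X\rtimes_r G\to\KK(X)\rtimes_r G$ is again injective. Composing with the isomorphism $\KK(X\rtimes_r G)\cong\KK(X)\rtimes_r G$ already used in \corref{corona} and \lemref{contain 2}, and identifying the result with the restriction of $\phi_{A\rtimes_r G}$ to $J_X\rtimes_r G$, shows that $\phi_{A\rtimes_r G}$ maps the ideal $J_X\rtimes_r G$ injectively into $\KK(X\rtimes_r G)$. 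Since $J_{X\rtimes_r G}$ is by definition the largest ideal of $A\rtimes_r G$ with this property, the inclusion $J_X\rtimes_r G\subset J_{X\rtimes_r G}$ follows. It is precisely this injectivity-preservation that can fail for full crossed products, which is why \thmref{full} must hypothesize the ideal equality whereas part~(1) here is unconditional.

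With covariance in hand, integrating the resulting homomorphism $\OO_X\to M(\OO_{X\rtimes_r G})$ together with $k_{A\rtimes_r G}\circ i_G^{A,r}$, as in \cite[proof of Proposition~4.3]{kqrfunctor}, produces $\Psi_r\colon\OO_X\rtimes_r G\to\OO_{X\rtimes_r G}$. Uniqueness is automatic since $T_X\rtimes_r G$ is surjective, and $\Psi_r$ is surjective because $\Phi_r$ and $T_{X\rtimes_r G}$ are; commutativity of \eqref{Psi r} is then a routine check on generators. The stated identities $\Psi_r\circ(k_X\rtimes_r G)=k_{X\rtimes_r G}$ and $\Psi_r\circ(k_A\rtimes_r G)=k_{A\rtimes_r G}$ follow by combining \eqref{Psi r} with the triangle \eqref{triangle red} and the relation $\Phi_r\circ(t_X\rtimes_r G)=t_{X\rtimes_r G}$, which in turn comes from $\Phi_r=\Theta_r\inv$ in \thmref{toep red}.

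For part~(2) the covariance of $(k_X\rtimes_r G,\,k_A\rtimes_r G)$ is now assumed, so integrating the triangle \eqref{triangle red} directly yields $\Upsilon_r=(k_X\rtimes_r G)\times(k_A\rtimes_r G)\colon\OO_{X\rtimes_r G}\to\OO_X\rtimes_r G$, characterized by $\Upsilon_r\circ k_{X\rtimes_r G}=k_X\rtimes_r G$ and $\Upsilon_r\circ k_{A\rtimes_r G}=k_A\rtimes_r G$. I would then check that $\Psi_r$ and $\Upsilon_r$ are mutually inverse by evaluating the two composites on generators: together with the compatibility identities from part~(1), these formulas show that $\Psi_r\circ\Upsilon_r$ fixes $k_{X\rtimes_r G}$ and $k_{A\rtimes_r G}$ while $\Upsilon_r\circ\Psi_r$ fixes $k_X\rtimes_r G$ and $k_A\rtimes_r G$, each of these families generating the respective algebra. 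Commutativity of \eqref{surjection 2 red} is obtained by evaluating $\Upsilon_r\circ T_{X\rtimes_r G}$ and $(T_X\rtimes_r G)\circ\Phi_r\inv$ on $t_{X\rtimes_r G}$ and $t_{A\rtimes_r G}$, using $\Phi_r\inv=\Theta_r$ together with $T_X\circ t_X=k_X$ and $T_X\circ t_A=k_A$, and uniqueness of $\Upsilon_r$ is forced by surjectivity of $T_{X\rtimes_r G}$. The one genuine obstacle is the inclusion established in the second paragraph; everything else is diagram-chasing on generators.
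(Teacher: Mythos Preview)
Your argument for part~(2) matches the paper's, and your proof that $J_X\rtimes_r G\subset J_{X\rtimes_r G}$ is correct (indeed, the paper records exactly this fact in \thmref{reduced2}). The problem is in part~(1): integrating the covariant pair
\[
\bigl((k_{X\rtimes_r G}\circ i_X^r)\times(k_{A\rtimes_r G}\circ i_A^r),\ k_{A\rtimes_r G}\circ i_G^{A,r}\bigr)
\]
for $(\OO_X,G)$ only produces a homomorphism out of the \emph{full} crossed product $\OO_X\rtimes G$, not out of $\OO_X\rtimes_r G$. You assert without justification that this map descends to the reduced crossed product, but nothing in your argument forces $\ker\Lambda_{\OO_X}$ into the kernel. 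The paper flags precisely this obstruction in the proof of \thmref{toep red}: ``it is not easy to decide whether the composition \dots\ factors through the reduced crossed product.'' The Cuntz--Pimsner covariance you establish, and hence the inclusion $J_X\rtimes_r G\subset J_{X\rtimes_r G}$, does not help with this; it only tells you the map on $\OO_X$ exists, not that the crossed-product map descends.

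The paper sidesteps the issue entirely by working in the other direction. It observes that $(k_X\rtimes_r G,\,k_A\rtimes_r G)$ is an \emph{injective} Toeplitz representation of $(X\rtimes_r G,A\rtimes_r G)$ in $\OO_X\rtimes_r G$ (injectivity of $k_A\rtimes_r G$ coming from injectivity of $k_A$ and the fact that the reduced-crossed-product functor preserves injections), and that it carries a gauge action. Katsura's characterization \cite[Proposition~7.14]{KatsuraIdeal} of $\OO_{X\rtimes_r G}$ as the smallest $C^*$-algebra generated by such a representation then yields directly a surjection $\Psi_r\colon\OO_X\rtimes_r G\to\OO_{X\rtimes_r G}$ intertwining the two representations, with no need to factor anything through a regular representation. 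Note that this argument never uses $J_X\rtimes_r G\subset J_{X\rtimes_r G}$; that inclusion enters only later, in \thmref{reduced2}, to verify the covariance hypothesis of part~(2).
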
 
  
%\begin{thm}\label{reduced}
%$i)$ 
%There is a unique homomorphism $\Psi_r$ making the diagram
%\begin{equation}\label{Psi r}
%\xymatrix@C+50pt{
%\TT_X\rtimes_r G \ar[r]^-{\Phi_r} \ar[d]_{T_X\rtimes_r G}
%&\TT_{X\rtimes_r G} \ar[d]^{T_{X\rtimes_r G}}
%\\
%\OO_X\rtimes_r G \ar@{-->}[r]_-{\Psi_r}^-{!}
%&\OO_{X\rtimes_r G}
%}
%\end{equation}
%commute, where $\Phi_r$ is the isomorphism from \thmref{toep red}.
%The map $\Psi_r$ is surjective and satisfies $$\Psi_r \circ (k_X \rtimes_r G) = k_{X \rtimes_r G}\, , \,  \Psi_r \circ (k_A\rtimes_r G) = k_{A \rtimes_r G}\,.$$
%
%\smallskip $ii)$ Assume that $(k_X \rtimes_r  G,\,k_A\rtimes_r  G)$ is Cuntz-Pimsner covariant. Then $\Psi_r$ is an isomorphism, and its inverse is $\Upsilon_r = (k_X\rtimes_r G) \times (k_A\rtimes_r G)$. The isomorphism $\Upsilon_r$ is the unique homomorphism  making the diagram
%\begin{equation}\label{surjection 2 red}
%\xymatrix@C+50pt{
%\TT_X\rtimes_r G\ar[d]_{T_X\rtimes_r G}
%&\TT_{X\rtimes_r G} \ar[d]^{T_{X\rtimes_r G}} \ar[l]_-{(\Phi_r)\inv}^-\simeq
%\\
%\OO_X\rtimes_r G
%&\OO_{X\rtimes_r G} \ar@{-->}[l]^-{\Upsilon_r}_-{!}
%}
%\end{equation}
%commute.
%
%
%\end{thm} 

\begin{proof}
%[Proof of \thmref{reduced}]
%The argument has significant overlap with the proof of \lemref{contain 2}.
(1)
We consider  the Toeplitz representation $(k_X\rtimes_r G,\,k_A\rtimes_r G)$ of $(X\rtimes_r G, \, A\rtimes_r G)$ in $\OO_X \rtimes_r G$. As $k_A$ is injective, $k_A\rtimes_r G$ is also injective, by \cite[Lemma~3.1]{lan:dual}. Hence, $(k_X\rtimes_r G,\,k_A\rtimes_r G)$ is injective. Moreover, it carries a gauge action: the proof of this fact is similar to that for $\Theta_r$ in the proof of \thmref{toep red},
using $(k_X,k_A)$ instead of $(t_X,t_A)$. Now, as recalled in the Preliminaries, the Cuntz-Pimsner algebra of a correspondence is the smallest $C^*$-algebra generated by an injective Toeplitz representation that carries a gauge action (cf.\ \cite[Proposition 7.14]{KatsuraIdeal}). Hence, there exists a homomorphism $\Psi_r $ from $  C^*(k_X\rtimes_r G,\,k_A\rtimes_r G) = \OO_X \rtimes_r G$ onto $\OO_{X\rtimes_r G}$ satisfying $$\Psi_r \circ (k_X \rtimes_r G) = k_{X \rtimes_r G}\, , \,  \Psi_r \circ (k_A\rtimes_r G) = k_{A \rtimes_r G}\,.$$ 
%as asserted. 
This homomorphism also satisfies  $\Psi_r \circ \big((k_X \rtimes_r G)  \times_\TT  (k_A \rtimes_r G)\big) = T_{X\rtimes_r G}\,.$ Using that $(\Phi_r)^{-1} = (t_X \rtimes_r G) \, \times_\TT \, (t_A \rtimes_r G)$, one checks without difficulty that $\Psi_r$ makes the diagram \eqref{Psi r} commute. 
Once we know $\Psi_r$ exists and makes the diagram \eqref{Psi r} commute,
it is unique since $T_{X} \rtimes_r G$ is surjective.

(2) Assume now that $(k_X\rtimes_r G,\,k_A\rtimes_r G)$ is Cuntz-Pimsner covariant. The homomorphism
$\Upsilon_r = (k_X\rtimes_r G) \times (k_A\rtimes_r G) : \OO_{X\rtimes_r G} \to \OO_X \rtimes_r G$ is then well defined and satisfies $\Upsilon_r \circ k_{X\rtimes_r G} = k_X \rtimes_r G, \,  \Upsilon_r \circ k_{A\rtimes_r G} = k_A \rtimes_r G$. It makes the diagram \eqref{surjection 2 red} commute because \eqref{triangle red} commutes. One checks immediately on generators that 
$\Psi_r$ and $\Upsilon_r$ are inverses of each other, so $\Psi_r$  an isomorphism, as asserted.
Once we know $\Upsilon_r$ exists and makes the diagram \eqref{surjection 2 red} commute,
it is unique since $T_{X\rtimes_r G}$ is surjective.
\end{proof}

When  $G$ is amenable, as it is in \cite{HN}, all the involved full crossed products coincide with their respective reduced crossed products, and Hao and Ng prove  that $(k_X\rtimes G,\,k_A\rtimes G)=(k_X\rtimes_r G,\,k_A\rtimes_r G)$ is Cuntz-Pimsner covariant in this case. An important step in their argument is to show  that the equality $J_X \rtimes G= J_{X\rtimes G}$ holds, hence that the equality $J_X \rtimes_r G= J_{X\rtimes_r G}$ also holds, when $G$ is amenable. 

\smallskip It seems worth recording the following related general result.

\begin{thm} \label{reduced2} 
We always have $J_X\rtimes_r G\subset J_{X\rtimes_r G}$. 

\smallskip  If $J_X\rtimes_r G\supset J_{X\rtimes_r G}$ $($or, equivalently, if $J_X\rtimes_r G  = J_{X\rtimes_r G})$,
then $(k_X\rtimes_r G,\,k_A\rtimes_r G)$ is Cuntz-Pimsner covariant, and the homomorphism $\Psi_r$ from \thmref{reduced} is an isomorphism from  $\OO_X \rtimes_r G$ onto $\OO_{X\rtimes_r G}$.  
\end{thm}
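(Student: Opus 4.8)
The plan is to treat the two assertions separately: the inclusion $J_X\rtimes_r G\subset J_{X\rtimes_r G}$ is the genuinely new ingredient, while the second assertion is a reduced mirror of \lemref{contain 2} followed by an appeal to \thmref{reduced}(2). The leverage throughout is that the reduced crossed product functor preserves injectivity of equivariant homomorphisms (as already exploited in \lemref{quotient} and \corref{corona}), which is precisely the property that failed for full crossed products and caused the gap described in the introduction.

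For the inclusion I would first record that the Katsura ideal $J_X$ is invariant under every automorphism of the correspondence, hence $G$-invariant, so that $J_X\rtimes_r G$ embeds as an ideal of $A\rtimes_r G$ (the reduced crossed product of an invariant ideal always embeds faithfully). I would then invoke the characterization of $J_{X\rtimes_r G}$ as the \emph{largest} ideal of $A\rtimes_r G$ that $\phi_{A\rtimes_r G}$ maps injectively into $\KK(X\rtimes_r G)$, so that it suffices to verify that $\phi_{A\rtimes_r G}$ restricts to an injective map of $J_X\rtimes_r G$ into $\KK(X\rtimes_r G)$. The key identification, via the isomorphism $\KK(X\rtimes_r G)\cong\KK(X)\rtimes_r G$ already used in \corref{corona}, is that $\phi_{A\rtimes_r G}$ on $J_X\rtimes_r G$ agrees with $\phi_A\rtimes_r G$; this is a routine computation on $C_c(G,J_X)$. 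Granting this, the restriction lands in $\KK(X)\rtimes_r G\cong\KK(X\rtimes_r G)$ because $\phi_A$ maps $J_X$ into $\KK(X)$, and it is injective because $\phi_A|_{J_X}$ is injective and reduced crossed products preserve injectivity. Maximality of $J_{X\rtimes_r G}$ then yields the inclusion.

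For the second assertion, assume $J_X\rtimes_r G\supset J_{X\rtimes_r G}$, so that by the first part the two ideals coincide. To prove that $(k_X\rtimes_r G,\,k_A\rtimes_r G)$ is Cuntz-Pimsner covariant I would argue exactly as in \lemref{contain 2}: the Cuntz-Pimsner covariance $k_X^{(1)}\circ\phi_A=k_A$ of the universal representation holds on $J_X$ and is $G$-equivariant, so taking reduced crossed products and using $(k_X\rtimes_r G)^{(1)}\circ(\phi_A\rtimes_r G)=(k_X^{(1)}\circ\phi_A)\rtimes_r G$ together with the same compact-operator identification yields $(k_X\rtimes_r G)^{(1)}\circ\phi_{A\rtimes_r G}=k_A\rtimes_r G$ on $J_X\rtimes_r G$; since $J_{X\rtimes_r G}\subset J_X\rtimes_r G$, this is the required covariance on $J_{X\rtimes_r G}$. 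With Cuntz-Pimsner covariance in hand, \thmref{reduced}(2) immediately gives that $\Psi_r$ is an isomorphism, with inverse $\Upsilon_r=(k_X\rtimes_r G)\times(k_A\rtimes_r G)$.

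I expect the main obstacle to be the verification that $\phi_{A\rtimes_r G}$ restricted to $J_X\rtimes_r G$ coincides with $\phi_A\rtimes_r G$ under $\KK(X\rtimes_r G)\cong\KK(X)\rtimes_r G$, together with the faithful embedding of $J_X\rtimes_r G$ as an ideal of $A\rtimes_r G$. This is exactly the point at which the full-crossed-product version breaks down, and the argument hinges on the fact that the reduced functor preserves both injectivity and ideal inclusions, whereas the full functor need not. Once these structural facts are pinned down, both halves follow by the bookkeeping above.
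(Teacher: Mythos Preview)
Your proposal is correct and follows essentially the same approach as the paper's proof. For the first inclusion, the paper simply cites the argument from Hao--Ng's proof of \cite[Proposition~2.7]{HN} (noting that it goes through verbatim in the reduced setting), while you have spelled out that argument explicitly---showing $\phi_{A\rtimes_r G}$ maps $J_X\rtimes_r G$ injectively into $\KK(X\rtimes_r G)$ via the identification with $\phi_A\rtimes_r G$ and injectivity-preservation of the reduced functor; for the second assertion, both you and the paper verify Cuntz-Pimsner covariance of $(k_X\rtimes_r G,\,k_A\rtimes_r G)$ on $J_{X\rtimes_r G}$ by the reduced-crossed-product version of the computation in \lemref{contain 2}, and then conclude via \thmref{reduced}(2).
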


\begin{proof}
To prove that $J_X\rtimes_r G\subset J_{X\rtimes_r G}$, it suffices to repeat the argument given by Hao and Ng in the beginning of their proof of \cite[Proposition 2.7]{HN}. (They tacitly switch to the reduced case in this argument, as they may, since they assume that $G$ is amenable). 
 
 \smallskip Assume that  $J_X\rtimes_r G\supset J_{X\rtimes_r G}$. To check that the Toeplitz representation $(k_X\rtimes_r G,k_A\rtimes_r G)$
is Cuntz-Pimsner covariant, we have to show that
\[
(k_X\rtimes_r G)^{(1)}\circ \phi_{A\rtimes_r G}=k_A\rtimes_r G
\]
on the Katsura ideal $J_{X\rtimes_r G}$.
Employing the isomorphism $\KK(X\rtimes_r G)\cong \KK(X)\rtimes_r G$,
since the diagram
\[
\xymatrix@C+20pt{
A\rtimes_r G \ar[r]^-{\phi_{A\rtimes_r G}} \ar[dr]_{\phi_A\rtimes_r G}
&M_{A\rtimes_r G}(\KK(X\rtimes_r G)) \ar[r]^-{(k_X\rtimes_r G)^{(1)}} \ar[d]_\simeq
&M(\OO_X\rtimes_r G)
\\
&M_{A\rtimes_r G}(\KK(X)\rtimes_r G) \ar[ur]_{k_X^{(1)}\rtimes_r G}
}
\]
commutes,
it suffices to show that
\begin{equation}\label{equal red}
(k_X^{(1)}\rtimes_r G)\circ (\phi_A\rtimes_r G)=k_A\rtimes_r G
\end{equation}
on $J_{X\rtimes_r G}$\,.
Now, since the equality $k_X^{(1)}\circ\, \phi_A=k_A$ on $J_X$ is $G$-equivariant,
taking reduced crossed products gives
\begin{equation}\label{equal2 red}
(k_X^{(1)}\circ \phi_A)\rtimes_r G=k_A\rtimes_r G\midtext{on}J_X\rtimes_r G.
\end{equation}
We have $J_{X\rtimes_r G}\subset J_X\rtimes_r G$ by hypothesis,
and
\[
(k_X^{(1)}\rtimes_r G)\circ (\phi_A\rtimes_r G)=(k_X^{(1)}\circ \phi_A)\rtimes_r G,
\]
so \eqref{equal red} follows from \eqref{equal2 red}.
\end{proof}

\begin{cor}\label{cp regular}
%Let $G$ act on a nondegenerate correspondence $(X,A)$.
Suppose that 
$A\rtimes G=A\rtimes_r G$
and $J_X\rtimes_r G\supset J_{X\rtimes_r G}$.
Then 
\[
\OO_X\rtimes G=\OO_X\rtimes_r G.
\]
\end{cor}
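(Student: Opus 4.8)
The plan is to reduce the statement to the two Hao--Ng isomorphisms already in hand --- $\Psi$ for full crossed products (\thmref{full}) and $\Psi_r$ for reduced crossed products (\thmref{reduced2}) --- and to use the regularity hypothesis $A\rtimes G=A\rtimes_r G$ to collapse every ``full versus reduced'' distinction. First I would record the structural consequences of regularity: by \thmref{toep regular} we get $X\rtimes G=X\rtimes_r G$, so the full and reduced crossed-product correspondences literally coincide. In particular the correspondence regular representation $(\Lambda_X,\Lambda_A)$ is the identity, $\OO_{X\rtimes G}=\OO_{X\rtimes_r G}$, and the two Katsura ideals agree: $J_{X\rtimes G}=J_{X\rtimes_r G}$.

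Next I would show that regularity descends to the $G$-invariant Katsura ideal $J_X$, i.e.\ that $J_X\rtimes G=J_X\rtimes_r G$. Since the full crossed product functor is exact, $J_X\rtimes G$ sits as an ideal of $A\rtimes G$; as $\Lambda_A\colon A\rtimes G\to A\rtimes_r G$ is injective by hypothesis, its restriction to $J_X\rtimes G$ is injective with range the closure of $C_c(G,J_X)$, which is exactly $J_X\rtimes_r G$. Thus $J_X\rtimes G=J_X\rtimes_r G$ as ideals of the common algebra $A\rtimes G=A\rtimes_r G$.

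Combining these equalities with the hypothesis $J_X\rtimes_r G\supset J_{X\rtimes_r G}$ --- which, together with the always-valid inclusion of \thmref{reduced2}, gives $J_X\rtimes_r G=J_{X\rtimes_r G}$ --- yields
\[
J_X\rtimes G=J_X\rtimes_r G=J_{X\rtimes_r G}=J_{X\rtimes G}.
\]
Hence $J_X\rtimes G=J_{X\rtimes G}$, so \thmref{full} applies and $\Psi\colon\OO_X\rtimes G\to\OO_{X\rtimes G}$ is an isomorphism. On the reduced side the hypothesis lets \thmref{reduced2} apply directly, so $\Psi_r\colon\OO_X\rtimes_r G\to\OO_{X\rtimes_r G}$ is an isomorphism as well. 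Under the identification $\OO_{X\rtimes G}=\OO_{X\rtimes_r G}$ from the first step, both $\Psi$ and $\Psi_r$ thus have codomain the same algebra.

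Finally I would identify the regular representation $\Lambda_{\OO_X}\colon\OO_X\rtimes G\to\OO_X\rtimes_r G$ with the composite $\Psi_r^{-1}\circ\Psi$. Stacking the defining diagrams of $\Psi$ (from \thmref{full}) and $\Psi_r$ (from \thmref{reduced}) against the naturality square relating $T_X\rtimes G$, $T_X\rtimes_r G$, $\Lambda_{\TT_X}$ and $\Lambda_{\OO_X}$, and using that under our hypothesis all the Toeplitz-level and correspondence-level regular representations are in fact identities, a short chase on the range of the surjection $T_X\rtimes G$ gives $\Psi_r\circ\Lambda_{\OO_X}=\Psi$. Therefore $\Lambda_{\OO_X}=\Psi_r^{-1}\circ\Psi$ is an isomorphism, which is precisely the assertion $\OO_X\rtimes G=\OO_X\rtimes_r G$. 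I expect the main obstacle to be the descent of regularity to the invariant ideal $J_X$ in the second step --- this hinges on exactness of the \emph{full} crossed product, so that $J_X\rtimes G$ genuinely embeds in $A\rtimes G$ --- together with the care needed in the last step to verify that the abstract composite of isomorphisms really is the regular representation, and not merely some isomorphism between $\OO_X\rtimes G$ and $\OO_X\rtimes_r G$.
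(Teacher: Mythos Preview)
Your proposal is correct and follows essentially the same route as the paper: use \thmref{toep regular} to collapse the full/reduced distinction at the correspondence level, observe that regularity of $(A,G)$ forces $\Lambda_{J_X}$ to be an isomorphism so that $J_X\rtimes G=J_{X\rtimes G}$, invoke the full and reduced Hao--Ng isomorphisms, and then read off that $\Lambda_{\OO_X}$ is an isomorphism from a commutative square. The paper phrases the final square with the inverse maps $\Upsilon$ and $\Upsilon_r$ rather than $\Psi$ and $\Psi_r$, but the argument is the same.
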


\begin{proof}
By \thmref{toep regular},
$\Lambda_X:X\rtimes G\to X\rtimes_r G$ is an isomorphism.
Since $J_X$ is a $G$-invariant ideal of $A$
and $\Lambda_A$ is faithful,
the regular representation $\Lambda_{J_X}$ is an isomorphism.
Moreover, by  Proposition \ref{reduced} we have $J_X\rtimes_r G=J_{X\rtimes_r G}$,
and hence $J_X\rtimes G=J_{X\rtimes G}$\,.
Thus, by Theorems~\ref{full}, \ref{reduced} and  \ref{reduced2},
we have a commutative diagram
\[
\xymatrix{
\OO_X\rtimes G \ar[d]_{\Lambda_{\OO_X}}
&\OO_{X\rtimes G} 
\ar[d]^{(k_{X\rtimes_r G}\circ \Lambda_X)\times (k_{A\rtimes_r G}\circ \Lambda_A)}_\simeq
\ar[l]_-\Upsilon^-\simeq
\\
\OO_X\rtimes_r G
&\OO_{X\rtimes_r G}, \ar[l]^-{\Upsilon_r}_-\simeq
}
\]
and it follows that $\Lambda_{\OO_X}$ is an isomorphism.
\end{proof}

%
%\section{Hao-Ng surjection for reduced crossed products}
%
%In this section we prove \thmref{surjection reduced}.
%\jqalert{I no longer see how we thought we had an argument for this!}

%\begin{thm}\label{surjection reduced}
%$J_X\rtimes_r G\subset J_{X\rtimes_r G}$ always,
%\jqalert{I subsumed the preceding bit into \thmref{reduced} above.
%Recall that I don't see how to deduce the following bit.}
%so there is always a canonical surjection of
%$\OO_X\rtimes_r G$ onto $\OO_{X\rtimes_r G}$.
%\end{thm}

%\begin{proof}[Proof of \thmref{surjection reduced}]
%We first show that $J_X\rtimes_{\alpha,r} G\subset J_{X\rtimes_{\gamma,r} G}$.
%\jqalert{I reckon this part is pretty straightforward to prove --- what I can't wrap my head around is the conclusion that there is a surjection $\OO_X\rtimes_r G\to \OO_{X\rtimes_r G}$. We do get a surjection from the full crossed product $\OO_X\rtimes G$ onto $\OO_{X\rtimes_r G}$, but I don't see how we show that this factors through the regular representation.}
%\end{proof}
%
%\section{Alternative Hao-Ng for reduced crossed products}
%
%In this section we give sufficient conditions for the hypothesis of \thmref{full} to be automatically satisfied.

It is not clear whether the equality $J_X \rtimes_r G = J_{X\rtimes_r G}$ holds in general. Anyhow, here is a result in this direction, probably close to what Katsura might have had in mind  in his comment to Hao and Ng about exact groups that we mentioned in the Introduction.

\begin{thm}\label{exact}
%Suppose that $G$ is discrete,
%and acts on a nondegenerate correspondence $(X,A)$.
%and assume that either
%$G$ is exact
%\cite{BO}
%or
%the action of $G$ on $A$ has Exel's Approximation Property \cite{exelamenable, exelng}.
%%\cite[Definition~4.4]{exelamenable},
%
%%\smallskip \noindent 
%Then we have
%$J_X\rtimes_r G = J_{X\rtimes_r G}$\,.
%Moreover, if $G$ is exact, then 
%\[
%\OO_X\rtimes_r G\, \cong\, \OO_{X\rtimes_r G}\, ;
%\]
%if the action of $G$ on $A$ has Exel's Approximation Property, then
%\[
%\OO_X\rtimes G=\OO_X\rtimes_r G\, \cong \, \OO_{X\rtimes_r G}=\OO_{X\rtimes G}\,.
%\]

Suppose that $G$ is discrete. 
%and acts on a nondegenerate correspondence $(X,A)$.

\begin{enumerate}
\item\label{thm:discreteexact} If $G$ is exact \cite{BO}, then $J_X\rtimes_r G = J_{X\rtimes_r G}$ and 
\[
\OO_X\rtimes_r G\, \cong\, \OO_{X\rtimes_r G}.
\]
\item\label{thm:discreteEAP} If $G$ has Exel's Approximation Property \cite{exelamenable, exelng}, then $J_X\rtimes_r G = J_{X\rtimes_r G}$ and
\[
 \OO_X\rtimes G=\OO_X\rtimes_r G\, \cong \, \OO_{X\rtimes_r G}=\OO_{X\rtimes G}.
\]
\end{enumerate}

\end{thm}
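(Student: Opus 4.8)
The plan is to reduce the entire statement to the single reverse inclusion $J_{X\rtimes_r G}\subseteq J_X\rtimes_r G$, after which everything follows from results already in hand. Indeed, \thmref{reduced2} supplies the inclusion $J_X\rtimes_r G\subseteq J_{X\rtimes_r G}$ for free and, once equality is in place, promotes $(k_X\rtimes_r G,k_A\rtimes_r G)$ to a Cuntz-Pimsner covariant representation and hence upgrades the map $\Psi_r$ of \thmref{reduced} to an isomorphism $\OO_X\rtimes_r G\cong\OO_{X\rtimes_r G}$. So in both (1) and (2) the real work is the reverse inclusion of Katsura ideals, and the remaining assertions in (2) are obtained by collapsing full and reduced crossed products.

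For the reverse inclusion I would use Katsura's explicit description $J_X=\phi_A^{-1}(\KK(X))\cap(\ker\phi_A)^\perp$ and compute its two ingredients on the crossed-product side. Writing $I_X:=\phi_A^{-1}(\KK(X))$, the aim is to prove the two identities $\ker\phi_{A\rtimes_r G}=(\ker\phi_A)\rtimes_r G$ and $\phi_{A\rtimes_r G}^{-1}(\KK(X\rtimes_r G))=I_X\rtimes_r G$, both of which make sense under the standing identification $\KK(X\rtimes_r G)\cong\KK(X)\rtimes_r G$ and the fact that $\phi_{A\rtimes_r G}$ is the integrated form of the equivariant homomorphism $\phi_A$. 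To get at these I would introduce the $G$-$C^*$-algebra $B:=C^*\bigl(\phi_A(A)\cup\KK(X)\bigr)\subseteq\LL(X)$; as in the continuity argument of \corref{corona}, $G$ acts strongly continuously on $B$, $\KK(X)$ is a $G$-invariant ideal of $B$, the homomorphism $\phi_A\colon A\to B$ has kernel $\ker\phi_A$, and the composite $A\to B\to B/\KK(X)$ has kernel exactly $I_X$. Applying the reduced-crossed-product functor, the preservation of injectivity (the first statement of \lemref{quotient}) together with exactness of the sequences $0\to\KK(X)\rtimes_r G\to B\rtimes_r G\to(B/\KK(X))\rtimes_r G\to0$ and $0\to(\ker\phi_A)\rtimes_r G\to A\rtimes_r G\to(A/\ker\phi_A)\rtimes_r G\to0$ yields the two identities above. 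Finally, since $(\ker\phi_A)^\perp$ is again a $G$-invariant ideal, a coefficientwise computation (valid once these sequences are exact) gives $((\ker\phi_A)\rtimes_r G)^\perp=(\ker\phi_A)^\perp\rtimes_r G$ and $(I_X\rtimes_r G)\cap((\ker\phi_A)^\perp\rtimes_r G)=(I_X\cap(\ker\phi_A)^\perp)\rtimes_r G$, so that $J_{X\rtimes_r G}=(I_X\cap(\ker\phi_A)^\perp)\rtimes_r G=J_X\rtimes_r G$.

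The one hypothesis feeding this computation is that the reduced-crossed-product functor is exact on the $G$-invariant ideals appearing above. Under (1) this is precisely KW-exactness of the discrete group $G$ \cite{BO}. Under (2) I would instead argue that Exel's approximation property makes the action $\alpha$ amenable, hence regular, so that $A\rtimes G=A\rtimes_r G$ \cite{exelamenable}; amenability passes to the invariant ideals and quotients of $A$, to $\KK(X)$ through the equivariant Morita equivalence with the invariant coefficient ideal of $A$, and through the extension $0\to\KK(X)\to B\to B/\KK(X)\to0$ to $B$ itself, so that every algebra in sight has full crossed product equal to its reduced one; since the full-crossed-product functor is always exact, the required reduced sequences are exact as well. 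Either way the reverse inclusion, and hence $J_X\rtimes_r G=J_{X\rtimes_r G}$, follows, and \thmref{reduced2} delivers $\OO_X\rtimes_r G\cong\OO_{X\rtimes_r G}$, completing (1). For the extra equalities in (2), regularity gives $A\rtimes G=A\rtimes_r G$ and, by \thmref{toep regular}, $X\rtimes G=X\rtimes_r G$, so the crossed-product correspondences coincide and $\OO_{X\rtimes_r G}=\OO_{X\rtimes G}$; combining the just-established Katsura equality with regularity, \corref{cp regular} gives $\OO_X\rtimes G=\OO_X\rtimes_r G$, and chaining these yields $\OO_X\rtimes G=\OO_X\rtimes_r G\cong\OO_{X\rtimes_r G}=\OO_{X\rtimes G}$.

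I expect the main obstacle to be the identity $\phi_{A\rtimes_r G}^{-1}(\KK(X\rtimes_r G))=I_X\rtimes_r G$: one must set up the auxiliary algebra $B$ and verify both the strong continuity of its $G$-action and that the inclusion $B\rtimes_r G\hookrightarrow\LL(X\rtimes_r G)$ detects membership in $\KK(X\rtimes_r G)$ as membership in $\KK(X)\rtimes_r G$, so that the Calkin-type quotient $B/\KK(X)$ genuinely computes the preimage. By contrast the kernel identity, the annihilator identity, and the intersection identity are routine coefficientwise manipulations once the relevant reduced sequences are known to be exact.
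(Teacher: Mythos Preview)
Your approach differs substantially from the paper's. The paper works with the canonical conditional expectation $E\colon A\rtimes_r G\to A$: it shows directly that $E(J_{X\rtimes_r G})\subset J_X$ by checking the two defining conditions of the Katsura ideal, deduces that $J_{X\rtimes_r G}$ is an $E$-invariant ideal, and then invokes Exel's results (\cite[Theorem~5.1]{exelexactarxiv} in the exact case, \cite[Proposition~4.10]{exelamenable} for the approximation property) that $E$-invariant ideals are induced, so $J_{X\rtimes_r G}=E(J_{X\rtimes_r G})\rtimes_r G=J_X\rtimes_r G$. You instead compute $J_{X\rtimes_r G}$ from the formula $\phi^{-1}(\KK)\cap(\ker\phi)^\perp$ via exact sequences. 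For part~(1) this is a legitimate alternative: KW-exactness provides all the exact sequences you need. (Incidentally, only exactness of the $A$-sequences is required, not the $B$-sequence: once $(\phi_A\rtimes_r G)(b)\in\KK(X)\rtimes_r G$, its image in $(B/\KK(X))\rtimes_r G$ vanishes automatically, and the injective factorization through $(A/I_X)\rtimes_r G$ finishes the job.) What your route buys is a structural picture of how each ingredient of the Katsura ideal behaves under reduced crossed products; the paper's route is shorter and avoids the auxiliary algebra $B$.

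For part~(2), however, there is a genuine gap. You claim that Exel's approximation property renders $\alpha$ ``amenable'' and that this passes to $\KK(X)$ via Morita equivalence and to $B$ via the extension $0\to\KK(X)\to B\to B/\KK(X)\to 0$. But Exel's approximation property is a property of the Fell bundle over $A$, not transparently the same as Anantharaman-Delaroche amenability of $\alpha$; and mere regularity ($A\rtimes G=A\rtimes_r G$) does \emph{not} automatically pass to quotients, nor to $B$, which is neither a subalgebra nor a quotient of $A$. So neither the exactness of your $B$-sequence nor even that of all your $A$-sequences is justified as written. The cleanest repair is to note, as above, that only the $A$-sequences matter, and that their exactness follows from \cite[Proposition~4.10]{exelamenable}: under the approximation property every $E$-invariant ideal of $A\rtimes_r G$ is induced, and for any $G$-invariant $I\lhd A$ the kernel of $A\rtimes_r G\to(A/I)\rtimes_r G$ is $E$-invariant with $E$-image $I$, hence equals $I\rtimes_r G$. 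But this is precisely the Exel result the paper uses---so in case~(2) your approach ultimately rests on the same key input, just packaged differently.
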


When $G$ is discrete, the canonical map $i_A^r$ embeds $A$ in the reduced crossed product $A\rtimes_r G$,
and we will identify $A$ with its image in $A\rtimes_r G$.
Also, we'll write $u$ for the canonical unitary homomorphism $i_G^r:G\to M(A\rtimes_r G)$,
so that
\[
A\rtimes_r G=\clspn\{au_s:a\in A,s\in G\}.
\]
There is a unique faithful conditional expectation $E:A\rtimes_r G\to A$ such that
$E(f)=f(e)$ for $f\in C_c(G,A)$,
and which is also characterized by
\[
E(au_s)=\begin{cases}a\case s=e\\0\case s\ne e.\end{cases}
\]

Now we recall a few essential facts from \cite{exelamenable} and \cite{exelexactarxiv} where
Exel studies Fell bundles over discrete groups. By considering the semidirect product Fell bundle $A\times G$ naturally associated to the action of $G$ on $A$, we may write $A\rtimes_r G$ as the reduced cross sectional algebra of this Fell bundle (cf.\ \cite[Proposition 3.8]{exelamenable}) and apply Exel's results in our situation. 
 
Let  $\mathcal{J}$ be an ideal of $A\rtimes_r G$. The ideal $\mathcal{J}$ is called \emph{induced} if $\mathcal{J} = J \rtimes_r G$ for some $G$-invariant ideal $J$ of $A$. It is called \emph{invariant} if $E(\mathcal{J}) \subset \mathcal{J}$, or, equivalently, if $E(\mathcal{J}) = \mathcal{J} \cap A$.  It is clear that any induced ideal is invariant, but it is unknown whether the converse holds in general. 
%Since $E(J)$ is a $G$-invariant ideal of $A$,
%the ideal of $A\rtimes_r G$ generated by $E(J)$ is
%$E(J)\rtimes_r G$.
However, it follows easily from \cite[Theorem~5.1]{exelexactarxiv}
and
\cite[Proposition~4.10]{exelamenable}, respectively, that if
$G$ is exact or
the action of $G$ on $A$ has Exel's Approximation Property, 
%\cite[Definition~4.4]{exelamenable}
then  $\mathcal{J}$ is induced whenever it is invariant, in which case we have $\mathcal{J} = E(\mathcal{J}) \rtimes_r G$.
%\[
%E(J)\rtimes_r G=\{b\in A\rtimes_r G:E(b^*b)\in E(J)\}.
%\]

\begin{proof}[Proof of \thmref{exact}]
Suppose $G$ is exact or the action of $G$ has Exel's approximation property. We will first show that $E(J_{X\rtimes_r G}) \subset J_X$. 
%\\ 
Since $J_X \subset J_X\rtimes_r G \subset J_{X\rtimes_r G}$, this will show that $J_{X\rtimes_r G}$ is invariant. Appealing to the results of Exel recalled above, we will get that $ J_{X\rtimes_r G}$ is an induced ideal. Moreover, as we also have $J_X = E(J_X) \subset E(J_{X\rtimes_r G})$, this will give us that $E(J_{X\rtimes_r G}) = J_X$. Hence, we will be able to conclude that
$$J_{X\rtimes_r G} = E(J_{X\rtimes_r G}) \rtimes_r G = J_X \rtimes_r G \, ,$$
%as desired. 
so $J_X\rtimes_r G = J_{X\rtimes_r G}$, as asserted in both (a) and (b).

%For the first statement, it suffices to show that $J_{X\rtimes_r G}\subset J_X\rtimes_r G$.
%Since $J_X$ is a $G$-invariant ideal of $A$,
%$J_X\rtimes_r G$ is the ideal of $A\rtimes_r G$ generated by
%\[
%J_X=E(J_X\rtimes_r G)=(J_X\rtimes_r G)\cap A.
%\]
%By
%hypothesis and either
%\cite[Theorem~5.1]{exelexactarxiv} (for exact $G$)
%or
%\cite[Proposition~4.10]{exelamenable} (for the approximation property),
%since $J_{X\rtimes_r G}$ is a $*$-subalgebra of $A\rtimes_r G$,

To show that $E(J_{X\rtimes_r G})\subset J_X$, we take $b\in J_{X\rtimes_r G}$.
We need to show:
\begin{enumerate}
\renewcommand{\labelenumi}{(\roman{enumi})}
\item $E(b)\in (\ker \phi_A)\ann$;
\item $\phi_A(E(b))\in \KK(X)$.
\end{enumerate}
For (i), let $a\in \ker \phi_A$.
We have $b\in (\ker \phi_{A\rtimes_r G})\ann$
and
\[
\ker \phi_A\subset \ker \phi_{A\rtimes_r G}
\]
because $G$ is discrete,
so $ba=0$, and hence
\[
E(b)a=E(ba)=0.
\]
Thus $E(b)\in (\ker \phi_A)\ann$.

For (ii),
let $E':\KK(X)\rtimes_r G\to \KK(X)$ be the canonical conditional expectation.
We have $\phi_{A\rtimes_r G}(b)\in \KK(X\rtimes_r G)$, so, modulo the isomorphism $\KK(X\rtimes_r G)\cong \KK(X)\rtimes_r G$,
\[
\phi_A(E(b))=E'(\phi_{A\rtimes_r G}(b))\in E'(\KK(X)\rtimes_r G)=\KK(X)\,.
\]

Thus we have shown $J_X\rtimes_r G = J_{X\rtimes_r G}$. We may therefore apply \thmref{reduced2} and get
$\OO_X\rtimes_r G\, \cong\, \OO_{X\rtimes_r G}\,.$ 
Finally, if we know that the action of $G$ on $A$ has Exel's Approximation Property, then, by \cite[Theorem~4.6]{exelamenable} (or \cite[Theorem 3.9]{exelng}) we have
$A\rtimes G=A\rtimes_r G$; hence, in this case, using also  \thmref{toep regular} and Corollary \ref{cp regular}, we get
\[
\OO_X\rtimes G=\OO_X\rtimes_r G\, \cong \, \OO_{X\rtimes_r G}=\OO_{X\rtimes G}\,.
\qedhere
\]
%Thus $X\rtimes G=X\rtimes_r G$ by \thmref{toep regular}.
 %$\OO_X\rtimes G=\OO_X\rtimes_r G$ by \thmref{cp regular},
%and $\OO_X\rtimes_r G\cong \OO_{X\rtimes_r G}$ by \thmref{reduced}.
\end{proof}

%\begin{cor} Assume that $G$ is discrete. Then the following conditions are equivalent:
% \begin{enumerate}
% \item $G$ is exact and $A$ is exact;
% \item $\TT_{X} \rtimes_r G$ is exact;
% \item $\TT_{X \rtimes_r G}$ is exact.
% \item $\OO_{X} \rtimes_r G$ is exact;
% \item$\OO_{X \rtimes_r G}$ is exact.
% \end{enumerate}
% \end{cor}
% \begin{proof}
% This follows easily by combining \cite[Theorem 7.1]{KatsuraCorrespondence} and \cite[Theorem 7.2 and Remark 7.4]{anan:exact}, taking also into account that exactness passes to quotients and $C^*$-subalgebras.
%\end{proof}

As in the Toeplitz case, we include two results concerning nuclearity and exactness.

  \begin{prop}
 Assume that the action of $G$ on $A$ has Exel's Approximation Property and $A$ is nuclear. 
  Then 
 % $\TT_{X \rtimes G} \simeq \TT_{X} \rtimes G = \TT_{X} \rtimes_r G \simeq \TT_{X\rtimes_r G}$ and -- NB! moved to a prev. sect.
$\OO_{X\rtimes G} =\OO_{X \rtimes_r G}$  is nuclear. 
 Moreover, if we assume in addition that $G$ is  discrete, then $\OO_X\rtimes G=\OO_X\rtimes_r G$ is also nuclear.
\end{prop}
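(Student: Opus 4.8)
The plan is to assemble this result from the regularity machinery already developed, the nuclearity of the reduced crossed product of the coefficient algebra, and the transfer of nuclearity from a coefficient algebra up to its Cuntz--Pimsner algebra; the discrete case will then drop out of \thmref{exact}.

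First I would dispose of the identifications. Since the action of $G$ on $A$ has Exel's approximation property, the system $(A,G)$ is regular, that is, $A\rtimes G=A\rtimes_r G$, and \thmref{toep regular} then forces $X\rtimes G=X\rtimes_r G$. Hence the full and reduced crossed-product correspondences literally coincide, so $\OO_{X\rtimes G}=\OO_{X\rtimes_r G}$. Note that this step uses no discreteness assumption, which is important for keeping the first assertion valid for an arbitrary locally compact $G$ carrying the approximation property.

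Next I would prove nuclearity of this common algebra. The approximation property of the action together with nuclearity of $A$ gives that $A\rtimes_r G$ is nuclear; this is exactly the nuclearity output of \cite{exelng} that was already invoked in the Toeplitz case. Because a Cuntz--Pimsner algebra is nuclear as soon as its coefficient algebra is nuclear --- the nuclearity counterpart of the exactness statement \cite[Theorem~7.1]{KatsuraCorrespondence} that was used in \propref{toep exact} --- applying this to the correspondence $(X\rtimes_r G,A\rtimes_r G)$ shows that $\OO_{X\rtimes_r G}$ is nuclear. Therefore $\OO_{X\rtimes G}=\OO_{X\rtimes_r G}$ is nuclear, which settles the first assertion.

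Finally, assume in addition that $G$ is discrete. Then the Exel-approximation-property case of \thmref{exact} gives $\OO_X\rtimes G=\OO_X\rtimes_r G\cong\OO_{X\rtimes_r G}=\OO_{X\rtimes G}$, and since the algebra on the right was just shown to be nuclear and nuclearity is invariant under isomorphism, $\OO_X\rtimes G=\OO_X\rtimes_r G$ is nuclear as well. I expect no serious obstacle here: the argument is essentially a bookkeeping of earlier results. The one substantive external ingredient is Katsura's nuclearity theorem for Cuntz--Pimsner algebras, namely the step from $A\rtimes_r G$ nuclear to $\OO_{X\rtimes_r G}$ nuclear, and the only point demanding real care is structural rather than technical: the first half must be kept free of any discreteness hypothesis, so that the ``moreover'' clause genuinely adds content through the discrete-group conclusion of \thmref{exact}.
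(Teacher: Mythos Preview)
Your proof is correct and follows essentially the same approach as the paper: deduce $A\rtimes G=A\rtimes_r G$ nuclear from the approximation property via \cite{exelng}, apply Katsura's nuclearity result \cite[Corollary~7.4]{KatsuraCorrespondence} to get $\OO_{X\rtimes_r G}$ nuclear, and then invoke \thmref{exact} for the discrete case. If anything, your use of \thmref{toep regular} to obtain $X\rtimes G=X\rtimes_r G$ (and hence $\OO_{X\rtimes G}=\OO_{X\rtimes_r G}$) is slightly cleaner than the paper's citation of \corref{cp regular}, which carries an extra hypothesis not needed here.
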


\begin{proof}
It follows from \cite[Theorem 4.4]{exelng} that $A\rtimes G= A\rtimes_r G$ is nuclear. Hence, combining  \cite[Corollary 7.4]{KatsuraCorrespondence}  with \thmref{cp regular} gives the first assertion.  \thmref{exact} then gives the second assertion. 
\end{proof}

\begin{prop} 
%Assume that $G$ is discrete. Then the following conditions are equivalent:
Consider the following conditions:
 \begin{enumerate}
 \item $G$ is exact and $A$ is exact;
 %\item $\TT_{X} \rtimes_r G$ is exact;
 %\item $\TT_{X \rtimes_r G}$ is exact.
 \item $\OO_{X} \rtimes_r G$ is exact;
 \item$\OO_{X \rtimes_r G}$ is exact.
 \end{enumerate}
 Then we have $(1) \Rightarrow (2) \Rightarrow (3)$. 
 If $G$ is discrete, then all three conditions are equivalent.
 \end{prop}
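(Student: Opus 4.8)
The plan is to mirror the proof of the Toeplitz analogue \propref{toep exact}, replacing the isomorphism available there with the surjection $\Psi_r$ from \thmref{reduced}. The recurring input is Katsura's characterization \cite[Theorem~7.1]{KatsuraCorrespondence}: for any nondegenerate correspondence $(Y,B)$, the algebra $\OO_Y$ is exact if and only if $B$ is exact, the reverse direction being immediate since $k_B$ embeds $B$ as a $C^*$-subalgebra of $\OO_Y$ and exactness passes to subalgebras. For $(1)\Rightarrow(2)$ I would argue exactly as in the Toeplitz case: if $A$ is exact then $\OO_X$ is exact by \cite[Theorem~7.1]{KatsuraCorrespondence}, and since $G$ is exact, \cite[Theorem~7.2]{anan:exact} applied to the $G$-$C^*$-algebra $(\OO_X,\beta)$ gives that $\OO_X\rtimes_r G$ is exact.

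For $(2)\Rightarrow(3)$ the Cuntz--Pimsner setting genuinely differs from the Toeplitz one, and this is where I expect the only real subtlety. We have no unconditional isomorphism $\OO_X\rtimes_r G\simeq\OO_{X\rtimes_r G}$ at our disposal (that would require $J_X\rtimes_r G=J_{X\rtimes_r G}$), so exactness cannot be transported across an isomorphism as it was for Toeplitz algebras. Instead I would use the surjective homomorphism $\Psi_r\colon\OO_X\rtimes_r G\to\OO_{X\rtimes_r G}$ of \thmref{reduced}(1): since $\OO_{X\rtimes_r G}$ is thereby a quotient of $\OO_X\rtimes_r G$, and exactness is inherited by quotients, $(2)$ forces $(3)$. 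Note that this yields only the single implication $(2)\Rightarrow(3)$, which is precisely what the statement asserts, rather than the equivalence $(2)\Leftrightarrow(3)$ that held in \propref{toep exact}.

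It then remains, when $G$ is discrete, to close the cycle with $(3)\Rightarrow(1)$. Applying \cite[Theorem~7.1]{KatsuraCorrespondence} to the crossed-product correspondence $(X\rtimes_r G,A\rtimes_r G)$, exactness of $\OO_{X\rtimes_r G}$ yields exactness of $A\rtimes_r G$. Since $G$ is discrete, $i_A^r$ embeds $A$ as a $C^*$-subalgebra of $A\rtimes_r G$, so $A$ is exact; and, exactly as in \propref{toep exact}, using that exactness passes to $C^*$-subalgebras and to unitizations one recovers that $C^*_r(G)$, and hence $G$, is exact. Together with $(1)\Rightarrow(2)\Rightarrow(3)$ this gives the equivalence of all three conditions for discrete $G$. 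The main obstacle throughout is the $(2)\Rightarrow(3)$ step: the reduced Hao--Ng isomorphism is unavailable in general, and the point is to recognize that one nevertheless has the quotient map $\Psi_r$, so that permanence of exactness under quotients suffices.
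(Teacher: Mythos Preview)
Your argument is correct and matches the paper's proof essentially line for line: $(1)\Rightarrow(2)$ via \cite[Theorem~7.1]{KatsuraCorrespondence} and \cite[Theorem~7.2]{anan:exact}, $(2)\Rightarrow(3)$ via the surjection $\Psi_r$ of \thmref{reduced} together with permanence of exactness under quotients, and $(3)\Rightarrow(1)$ for discrete $G$ via \cite[Theorem~7.1]{KatsuraCorrespondence} applied to $(X\rtimes_r G,A\rtimes_r G)$ followed by the subalgebra/unitization argument already used in \propref{toep exact}. The only cosmetic remark is that $i_A^r$ embeds $A$ in $A\rtimes_r G$ for any locally compact $G$; discreteness is what lets you recover exactness of $C^*_r(G)$ (and hence of $G$) from that of $A\rtimes_r G$.
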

 \begin{proof} The proof is quite similar to the proof of Proposition \ref{toep exact}.
 Assume that (1) holds. Then, by \cite[Theorem 7.1]{KatsuraCorrespondence}, $\OO_X$ is exact,  so \cite[Theorem 7.2]{anan:exact} gives that (2) holds. Since exactness passes to quotients \cite{BO}, it follows from  Theorem \ref{reduced} that (2) $\Rightarrow$ (3). If $G$ is discrete and (3) holds, then \cite[Theorem 7.1]{KatsuraCorrespondence} gives that $A\rtimes_r G$ is exact, so (1) holds. \end{proof}

\section{Concluding remarks}
%\section{Cuntz-Pimsner crossed products}\label{CP sec}
%\jqalert{I think this discussion should be only summarized in the introduction, and a distilled version of it included as an ``extended remark'' at the end.} EB: I agree; some parts might also be included in the Preliminaries.

We conclude with a discussion of the problem that originally motivated this work: Is  $\OO_X\rtimes_\beta G\cong \OO_{X\rtimes_\gamma G}$ in general? Hao and Ng have shown \cite[Theorem~2.10]{HN} that the answer is yes if $G$ is amenable.
%\footnote{And they also remark that Katsura has told them privately that it is enough for $G$ to be exact, although we suspect this must be for reduced crossed products.} 
In Theorem \ref{full} we expand on this to show that we have the desired isomorphism whenever $J_{X\rtimes G} = J_X \rtimes G$. We do not know  whether this is true in general.

Problems arise even when just considering whether we have an inclusion $J_X \rtimes G \subset J_{X\rtimes G}$. By definition, the Katsura ideal $J_{X\rtimes G}$ is the largest ideal of $A\rtimes G$ that is mapped by the left action $\varphi_{A \rtimes G}$ injectively into $\KK(X\rtimes G)$. Given an action $\gamma$ of $G$ on $X$, there is an induced action, usually denoted $\gamma^{(1)}$, of $G$ on $\KK(X)$ and there is always an isomorphism $\KK(X\rtimes G) \cong \KK(X) \rtimes G$ (see \cite{Combes1984} for example). Moreover, the diagram
\[
\xymatrix@C+30pt{
A\rtimes G \ar[r]^-{\varphi_{A\rtimes G}} \ar[dr]_{\varphi_A\rtimes G}
&M\bigl(\KK(X\rtimes G)\bigr) \ar[d]^{\tau}_\cong
\\
&M\bigl(\KK(X)\rtimes G\bigr)
}
\]
commutes.
Thus, the question becomes whether $\varphi_A\rtimes G$ maps $J_X\rtimes G$ injectively into $\KK(X)\rtimes G$.
We know that $\varphi_A$ maps $J_X$ injectively into $\KK(X)$,
so certainly $\varphi_A\rtimes G$ maps $J_X\rtimes_\alpha G$ \emph{into} $\KK(X)\rtimes G$ ---
the remaining issue is whether 
$\varphi_A\rtimes G$ is injective on $J_X\rtimes_\alpha G$.

What could possibly go wrong?
Well, $J_X$ is an ideal of $A$, but its (faithful) image in $\KK(X)$ is only a $G$-invariant $C^*$-subalgebra.
Now we run up against the following unpleasant behavior:
if $C^*(G)$ is nonnuclear then there exist an action of $G$ on a $C^*$-algebra $B$ and a $G$-invariant $C^*$-subalgebra $C\subset B$ such that $C\rtimes G\not\subset B\rtimes G$.
In fact, we can do it with a trivial action, because (see, e.g., \cite[Theorem~IV.3.1.12]{blackadar}) we can have
\[
C\otimes_{\max} C^*(G)\not\subset B\otimes_{\max} C^*(G).
\]
Our situation is special so this does not necessarily provide a counter-example, but it suggests that it is a difficult question in general.

\providecommand{\bysame}{\leavevmode\hbox to3em{\hrulefill}\thinspace}
\providecommand{\MR}{\relax\ifhmode\unskip\space\fi MR }
% \MRhref is called by the amsart/book/proc definition of \MR.
\providecommand{\MRhref}[2]{%
  \href{http://www.ams.org/mathscinet-getitem?mr=#1}{#2}
}
\providecommand{\href}[2]{#2}

\end{document}